\renewcommand{\Re}{{\rm Re}}
\renewcommand{\Im}{{\rm Im}}
\providecommand{\U}[1]{\protect\rule{.1in}{.1in}}
\newtheorem{theorem}{Theorem}[section]
\newtheorem{definition}[theorem]{Definition} 
\newtheorem{lemma}[theorem]{Lemma}
\newtheorem{proposition}[theorem]{Proposition} 
\newtheorem{corollary}[theorem]{Corollary} %
\newtheorem{remark}[theorem]{Remark} 
\newenvironment{TC} {\left \{\begin{array}{ll}} {\end{array} \right.}
\newcommand{\alp}{\alpha}
\newcommand{\lam}{\lambda}
\newcommand{\kap}{\kappa}
\newcommand{\R}{\mathbb{R}}
\newcommand{\C}{\mathbb{C}}
\newcommand{\ddt}{\, \frac{d}{dt}}
\newcounter{margcount} %Zähler erzeugen
\title{Self-similar lifting and persistent touch-down
points in the thin-film equation}
\author[1]{Carlota M. Cuesta}
\author[2]{Hans Kn\"upfer}
\author[3]{Juan J. L. Vel\'azquez}
\affil[1]{Departamento de Matem\'aticas, Faculty of Science and Technology, University of the Basque Country (UPV/EHU),\newline e-mail: carlotamaria.cuesta@ehu.eus}
\affil[2]{Institut f\"ur Angewandte Mathematik, \newline IWR, Universit\"at Heidelberg,\newline
 e-mail: hans.knuepfer@math.uni-heidelberg.de }
\affil[3]{Institut f\"ur Angewandte Mathematik, Universit\"at Bonn, \newline e-mail: velazquez@iam.uni-bonn.de}
\begin{document}

\maketitle

\begin{abstract}
  We study self-similar solutions of the thin-film equation
  \begin{align*} %
    h_{t} + (h^{m} h_{xxx})_{x} = 0 &&\text{in $\{(x,t):\quad h(x,t) > 0 \}$}
  \end{align*}
with $m \in (0,4]$, that describe the lifting of an isolated touch-down point given by an initial 
profile of the form $h_{\rm in}(x)= |x|$. This provides a mechanism for non-uniqueness of the 
thin-film equation with $m \in (2,4)$, since solutions with a persistent touch-down point 
also exist in this case. 
In order to prove existence of the self-similar solutions, we need to study a four-dimensional 
continuous dynamical system. The proof consists of a shooting argument based on the identification 
of invariant regions and on suitable energy formulas.

\medskip

\noindent
\textbf{Keywords:} self-similar solutions, thin-film equation, non-uniqueness

\noindent
\textbf{AMS subject classification:} 35K65, 34A34, 76D27

\noindent
\textbf{Short title:} Self-similar lifting in the thin-film equation.

\end{abstract}

%%%%%%%%%%%%%%%%%%%%%%%%%%%%%%%%%%%%%%%%%%%%%%%%%%%%%%%%%%%%%%%%%%%%%%%%%%%%%%%%%%%%%%%%%%%%%%%%%%%%%%%%%%%%%%%%%%%
%%%%%%%%%%%%%%%%%%%%%%%%%%%%%%%%%%%%%%%%%%%%%%%%%%%%%%%%%%%%%%%%%%%%%%%%%%%%%%%%%%%%%%%%%%%%%%%%%%%%%%%%%%%%%%%%%%%
\section{Introduction}\label{sec-into}

We consider the family of thin-film equations
\begin{align} \label{tfe} %
  h_{t} + (h^{m} h_{xxx})_{x} = 0 &&\text{in $\{(x,t):\quad h(x,t) > 0 \}$}
\end{align}
for $m \in (0,4]$. In particular, we study solutions that exhibit self-similar
lifting of an initial profile with a touch-down point of the form
$h_{\rm in}(x)=a|x|$ ($a>0$) for $m\in(0,4]$. This situation is relevant for
  the merging process of two adjacent droplets. Our result also gives an
explicit example of the known non-uniqueness of weak solutions exhibited by
\eqref{tfe} as we explain below. Before we state our results and put them into
context, let us recall some properties of \eqref{tfe}.

The model \eqref{tfe} describes the spreading of a liquid film on a substrate with height profile $h(x,t)$ whose
evolution is driven by capillary effects. It can be derived formally from the Navier-Stokes equation in the so-called
regime of lubrication approximation. The exponent $m > 0$ is determined by the precise boundary conditions imposed at
the liquid--solid interface. In particular, the case $m = 3$ is related to imposing a no-slip condition at this interface, while the case $m=2$ corresponds to a Navier-slip condition \cite{Greenspan-1978,Reynolds-1886}. The
case of more general mobility $m > 0$ can be derived using certain generalized Navier-slip type conditions, see e.g.
\cite{BarrettBloweyGarcke-1998,Bertozzi-1998,GiacomelliShishkov-2005,OronDavisBankoff-1997}. Furthermore, for $m =1$,
\eqref{tfe} arises as the lubrication approximation of the Hele-Shaw flow \cite{ConstantinDupontEtal-1993}.

Equation \eqref{tfe} is a degenerate fourth order parabolic equation, formulated
on a domain with a free boundary. Note that, while the equation is parabolic on
the positivity set of $h$, the parabolicity degenerates at the points where
$h$ vanishes. Solutions of \eqref{tfe} formally satisfy a dissipation
relation, namely, for positive smooth solutions $h > 0$, it is given by
(see e.g. \cite{BernisFriedman-1990})
\begin{align}\label{energy} 
  \frac 12 \ddt \Big(\int_\R h_x^2 \ dx \Big) + \int_{\R} h^m h_{xxx}^2 \ dx \ = \ 0.
\end{align}
 
In this paper we use the following notion of weak solution of \eqref{tfe}, see also \cite{BerettaBertschDalpasso-1995,BernisFriedman-1990, BertozziPugh-1996}:
\begin{definition}[Weak Solution]\label{def-weak} 
  Let $t_0 \geq -\infty$ and let $I = [t_0,\infty)$. Then
  $h \in C^0(I \times \R)$ $\cap$ $L^\infty(I, H_{\rm loc}^1(\R))$ such that
  $h^{m/2} h_{xxx}$ exists in the distributional sense and
  \begin{align} \label{diff-fin} %
    \int_{\R} |h^{m/2} h_{xxx}|^2 \ dx < \infty &&\text{for a.e. $t > t_0.$}
  \end{align}
is called a weak solution of \eqref{tfe} if $h \geq 0$ and \text{for all $\psi \in C_c^\infty(I \times \R)$}, 
we have
  \begin{align}\label{weak}
    \int_I \int_{\R} h\psi_{t} \ dx \,ds + \int_{I} \int_{\{h>0\}} h^{m}h_{xxx}\psi _{x} dx\,ds=0.
  \end{align}
  The function $h_{\rm in} \in C^0(\R)$ is called initial data if $h(x,t) \to h_{\rm in}(x)$ as $t \to t_0$.  
\end{definition}
\begin{remark}[Dissipation] \label{rem-diss}%
  The expression $h^{m/2} h_{xxx}$ in \eqref{diff-fin} is understood in the
  distributional sense. We notice that this definition differs from the
  a.e. pointwise definition for $m \in (0,2)$. For example, for the cone
  $h(x) = |x|$ we have $h^m h_{xxx} = 0$ a.e. $x \in \R$. However, understood in
  the distributional sense, we have $h_{xxx}= \delta_{0,x}$ (the derivative of a Dirac mass at zero). By an argument based on integration by parts, it follows that $h^{\frac{m}{2}}h_{xxx} = |x|^{m/2} \delta_{0,x}$ is not well-defined if $m\leq 2$ and vanishes if $m> 2$ (see e.g. \cite{DuistermaatKolk2010}). 
This shows that $h(x)= a|x|$ with $a>0$ are stationary weak solutions of \eqref{tfe} in the sense of Definition~\ref{def-weak} if and only if $m>2$.
\end{remark}
\begin{remark}[Contact angle] %
 The Definition~\ref{def-weak} of weak solutions does not include a contact
  angle condition. Including such a condition for weak solutions is not
  straightforward since their regularity is not sufficient to control the 
first derivative point-wise. We note that there are other definitions of
  weak solutions. In particular, there is one that requires additional regularity 
(which leads to the so-called entropy solutions) that implies zero contact angle 
(see, for instance, \cite{BerettaBertschDalpasso-1995,BernisFriedman-1990,BertozziPugh-1996} -one
 space dimension- and \cite{BertschDalpassoGarckeGruen-1998, DalpassoGarckeGruen-1998} 
-several space dimensions). There are also formulations of weak solutions for a prescribed
 non-zero contact angle. This is a more delicate issue, since the corresponding solutions are less
  regular. We refer the reader to \cite{Otto-1998} for $m = 1$, and to 
\cite{BertschGiacomelliKarali-2005} for a different notion of weak solution that modifies the 
energy \eqref{energy} and is valid for $m \in (0,3)$.
\end{remark}

In the context of \eqref{tfe}, the {\em rupture} of a droplet into two droplets
and, reversely, the {\em merging} of two droplets into a single droplet, 
corresponds to a topological change of the positivity set $\{ h > 0 \}$. In this
work, we consider the situation when the positivity set initially consists of
two connected components which are separated at a single {\em touch-down point}
in the specific case $h_{\rm in}(x) = a |x|$. For this initial data and for
$m \in (0,4)$, we show that there exist self-similar solutions which display
{\em lifting} of the touch-down point. These solutions are weak solutions in the
sense of Definition~\ref{def-weak}. For completeness, we also consider the case
$m=4$ which corresponds to lifting from an 'initial' negative infinite time. On
the other hand, $h(x)=a|x|$ ($a>0$) is a stationary weak solution of \eqref{tfe}
for $m > 2$ (see Remark \ref{rem-diss}); hence our results yield a special
mechanism of non-uniqueness for solutions of \eqref{tfe}.

The non-uniqueness of weak solutions for $m \in (0,5)$ has been shown
by Beretta, Bertsch and Dal Passo in \cite{BerettaBertschDalpasso-1995}. In
order to show this, the authors regularize \eqref{tfe} and this
allows to construct solutions which stay positive, while other solutions 
for the same initial data vanish at some specific points in space. The self-similar solutions
obtained in the current paper yield a mechanism on how this non-uniqueness may
take place at isolated touch-down points (we refer also to the discussion at
  the end of Section \ref{sec-results}). For existence theory of weak
  solutions of \eqref{tfe}, we refer to, e.g., 
\cite{BerettaBertschDalpasso-1995, BernisFriedman-1990,BertozziPugh-1996} 
in the one-dimensional case, and to 
\cite{BertschDalpassoGarckeGruen-1998,DalpassoGarckeGruen-1998,Gruen-2004-1}
 in the case of higher space dimensions. Corresponding results on
 existence and uniqueness of classical solutions have been addressed in, e.g.,
 \cite{GiacomelliGnannKnuepferOtto-2014,GiacomelliKnuepfer-2010,GiacomelliKnuepferOtto-2008,John-2015,Knuepfer-2011}, see also \cite{Gnann-2016}. A well-posedness result for $m=1$ in the case of partial wetting is included in \cite{KnuepferMasmoudi-2013,KnuepferMasmoudi-2015}.

Most of the analysis on self-similar solutions of \eqref{tfe} 
has focused on the study of source-type solutions. In, e.g., 
\cite{BernisPeletierWilliams-1992}, it has been shown that source-type self-similar 
solutions exist for all $m \in (0,3)$, but not for $m = 3$; for a corresponding result 
in higher space dimensions, we refer to \cite{BernisFerreira-1997}. These results are 
consistent with the conjecture that droplet spreading is not possible for $m \geq 3$ (see
\cite{DussanDavis-1974,HuhScriven-1971}). Additional regularity up to the moving 
boundary of the source-type self-similar solutions with $m\in(\frac 32,3)$ is
studied in \cite{GiacomelliGnannOtto-2013}. A source-type self-similar solution
with zero-contact angle and where a portion of the boundary may undergo drainage is
studied in \cite{BernisHulshofKing-2000} for $m \in (0,3)$.

\medskip

\textbf{Structure of the paper.} In Section~\ref{sec-results}, we state our main
results on the existence of solutions which exhibit self-similar lifting. The
proof of the theorem is given in Section~\ref{sec-lifting}. In the Appendix, we
formally derive the behavior of the profile for two identical droplets merging
in a self-similar way.

\section{Statement of the results}\label{sec-results}
We consider initial data with an isolated touch-down point at the origin, i.e.
\begin{align}\label{separation-point}
  h_{\rm in}(0) = 0 \qquad \text{and} \qquad
  h_{\rm in}(x) > 0 \quad \mbox{for}\ x \neq 0.
\end{align}
Moreover, we assume that the initial data is symmetric around this point, $h_{\rm in}(-x) = h_{\rm in}(x)$, with
non-zero slope on both sides, i.e.
\begin{align}\label{separation-point-angle}
  \lim_{x\to 0} |h_{\rm in,x}(x)| = a \quad && \mbox{for some} \quad a>0.
\end{align}
\begin{figure}[hhh]
  \begin{center}
    \includegraphics[width=8cm,height=7cm]{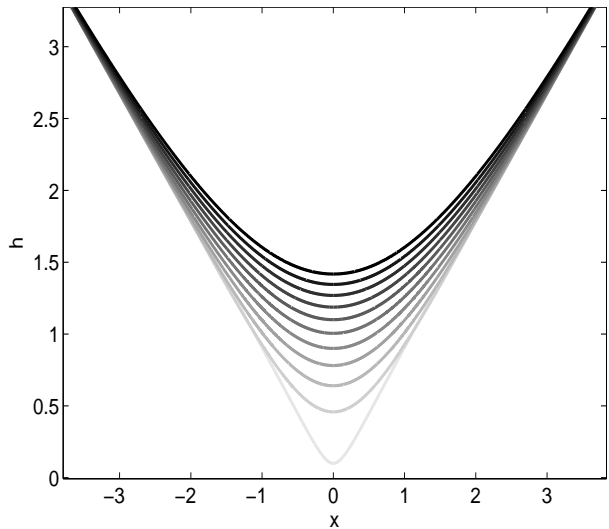}
  \end{center}
  \caption{The picture shows solutions $h(x,t)=t^\alpha f(|x|/t^\alp)$, where
    $f$ satisfies \eqref{ode}--\eqref{LBU} with $m=2$ and for $t\in[0.01,1.01)$
    at increments of size $0.2$. The solution is calculated using the ODE
      solver ode45 of MATLAB (a fourth order Runge-Kutta algorithm) by iterating
      in the shooting parameter $\kappa$.}
\label{evolution}
\end{figure}
We consider the existence of a self-similar solution of the form
\begin{align}\label{ss-form}
  h(x,t) = t^{\frac{1}{4-m}} f(y)\,,\quad y=\frac{|x|}{t^{\frac{1}{4-m}}} && \mbox{if } m \in (0,4)
\end{align}
for $x \in \R$ and for some $f : [0,\infty) \to \R$. In order to preserve
  the linear behavior of solutions in the outer region away from the touch-down
  point, we require (see below) that $f$ grows linearly as $|y| \to \infty$. For
$m=4$, we also consider solutions of the form:
\begin{align}\label{ss-form4}
  h(x,t) = e^{tb} f(y)\,,\quad y=\frac{|x|}{e^{tb}},
  \end{align}
  where $b > 0$ is a free parameter. For $m \in (0,4)$, the solution is defined
  for $t > 0$, while the problem is formulated for $t \in\R$ when 
$m = 4$. With this ansatz and defining the parameter
\begin{equation}\label{alpha:def}
\alpha:=
\begin{cases}
\displaystyle{\frac{1}{4-m}} \qquad\qquad  &\mbox{if } m \in (0,4),  \vspace{0.4ex}\\
\ \ \ b &\mbox{if } m=4, 
\end{cases} 
\end{equation}
the equation \eqref{tfe} becomes the fourth order ODE
\begin{align}\label{ode} 
  \alp \left(f - y f_y \right)+ (f^m f_{yyy})_y = 0 && \text{for $y \in (0,\infty)$.}
\end{align}
By our assumption that the solution is even, we have $f_{y}(0)=f_{yyy}(0)=0$, which leaves 
the two free parameters $f(0)$ and $f_{yy}(0)$. In view of the scaling invariance 
$(y, f) \mapsto (\lam y, \lam^{{\frac 4m}} f)$ for any $\lam > 0$ of
\eqref{ode}, it is enough to consider the initial conditions $f(0)=1$, $f_{yy}(0)=\kappa$ for some 
$\kappa \in \R$. Summarizing, we look for a solution of \eqref{ode} which satisfies the initial conditions
\begin{align}
  f(0)=1,\quad  f_y(0)=0, \quad f_{yy}(0)=\kappa>0, \quad f_{yyy}(0) = 0, \label{init:ss}
\end{align}
and which satisfies the behavior
\begin{align} 
 &\frac{f(y)}{y} \rightarrow a > 0  && \text{as} \quad y\to\infty, \label{LBU}
\end{align}
for some $a > 0$ which is not prescribed {\it a priori}.

\medskip

Our main result states the existence of a solution for the problem \eqref{ode}--\eqref{LBU} for $m \in(0,4]$:
\begin{theorem}[Existence of a self-similar solution] \label{thm-lifting}
  \text{} %
  \begin{enumerate}
  \item  Let $m \in (0,4)$. Then there exists $\kappa\in (0,\sqrt{12\alpha})$ and $a>0$ such that 
the problem \eqref{ode}--\eqref{LBU} has a solution.
  \item Let $m = 4$ and let $b > 0$. Then there exists $\kappa\in (0,\sqrt{12\alpha})$ and $a>0$ 
such that the problem \eqref{ode}--\eqref{LBU} has a solution that satisfies
    \begin{align}
      f(y) = ay +  \Re [K y^{1+z_0}] + o(y^{1+\Re( z_0)}) &&\text{as $y \to \infty$}
    \end{align}
    for $z_0, K \in \C$ with $\Re( z_0) < -1$. Here, $z_0$ is the root of
    $(1-z^2)(2+z) + \frac{b}{a^4}$ with the largest negative real part and, for definiteness, with $\Im(z_0)\geq 0$.%, in case of two complex conjugates.
  \end{enumerate}
\end{theorem}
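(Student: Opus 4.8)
The plan is to treat \eqref{ode} with the data \eqref{init:ss} as a one-parameter shooting problem in $\kappa>0$, and to split the admissible interval $(0,\sqrt{12\alpha})$ into three sets according to the large-$y$ behaviour of the solution $f(\cdot\,;\kappa)$: a set $S_-$ on which the solution develops a new touch-down point, i.e. $f$ vanishes at some finite $y$; a set $S_+$ on which it grows super-linearly, i.e. some monotone quantity built from $(f,f_y,f_{yy},f^m f_{yyy})$ escapes to $+\infty$ (or $f_{yyy}$ blows up at finite $y$); and the remaining set $S_0$, on which $f(y)/y$ converges to a finite positive limit $a$. Since $(0,\sqrt{12\alpha})$ is connected, it is enough to show that $S_-$ and $S_+$ are open, disjoint and nonempty, for then $S_0\neq\emptyset$ and any $\kappa\in S_0$ gives a solution of \eqref{ode}--\eqref{LBU}.

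First I would record local well-posedness and continuous dependence: since $f(0)=1>0$ the mobility $f^m$ is smooth at the initial point, so \eqref{ode}--\eqref{init:ss} has a unique maximal solution on an interval $[0,Y_{\max}(\kappa))$, depending continuously on $\kappa$ as long as $f$ stays bounded and bounded away from $0$. I would then derive the basic energy identities by testing \eqref{ode}. Integrating the equation itself over $(0,Y)$ (and using $f_{yyy}(0)=0$) gives the exact relation
\[
  \alpha\Bigl(2\int_0^Y f\,dy - Y f(Y)\Bigr) + f(Y)^m f_{yyy}(Y) \ = \ 0 ,
\]
while testing against $f_y$ yields the self-similar analogue of the dissipation \eqref{energy}, with boundary terms. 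Combined into a suitable monotone functional, these are what make the trichotomy above exhaustive. The bound $\kappa^2<12\alpha$ enters through the sign of this functional at the origin: from \eqref{ode}--\eqref{init:ss} one has $f(y)=1+\tfrac{\kappa}{2}y^2-\tfrac{\alpha}{24}y^4+\dots$ near $y=0$, and $\kappa^2=12\alpha$ is exactly the threshold at which the relevant positivity (equivalently, the invariant region below) is lost.

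Next I would build the invariant regions in the four-dimensional phase space, working with $N:=f^m f_{yyy}$ instead of $f_{yyy}$ --- it is well behaved at the degeneracy and satisfies $N_y=\alpha(yf_y-f)$. For $\kappa$ small, the solution enters and then stays in a region where $f_{yy}<0$; the resulting concavity, together with the displayed identity, forces $f$ to reach $0$ at a finite $y$ with $f_y<0$ there, so $S_-$ is nonempty, and by transversality of the touch-down together with continuous dependence it is open. For $\kappa$ close to $\sqrt{12\alpha}$, the solution is instead trapped in a region of sustained convexity, which drives super-linear growth, so $S_+$ is nonempty and open. Hence $S_0\neq\emptyset$; for $\kappa\in S_0$ one obtains a global solution with $f(y)/y\to a\ge0$, and a final use of the displayed identity (with $f>0$) excludes $a=0$. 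This proves part (i).

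For part (ii) one sets $\alpha=b$ and repeats the construction; the extra ingredient is the asymptotic expansion. Writing $f=ay+g$ with $g$ of lower order and linearizing \eqref{ode} about the linear profile at $m=4$ produces the equidimensional equation $b(g-yg_y)+a^4\bigl(y^4 g_{yyyy}+4y^3 g_{yyy}\bigr)=0$, whose solutions are powers $y^p$ with $p$ a root of the indicial polynomial $b(1-p)+a^4 p(p+1)(p-1)(p-2)$; the substitution $p=1+z$ factors out the trivial mode $z=0$ (which only renormalizes $a$) and leaves precisely $(1-z^2)(2+z)+b/a^4=0$. Since $b/a^4>0$, the root $z_0$ with the largest negative real part satisfies $\Re(z_0)<-1$, so $y^{1+z_0}$ is a decaying correction; a variation-of-constants / stable-manifold argument at $y=\infty$ then upgrades $f(y)/y\to a$ to $f(y)=ay+\Re[Ky^{1+z_0}]+o\bigl(y^{1+\Re(z_0)}\bigr)$. (For $m<4$ the analogous linearization is not equidimensional and a WKB balance shows the corrections decay like $e^{-c\,y^{(4-m)/3}}$, which is why only the weaker conclusion \eqref{LBU} is claimed there.) The step I expect to be the main obstacle is precisely the construction of the invariant regions and the proof that $S_-\cup S_+$ exhausts the complement of $S_0$: ruling out, for $\kappa\in S_0$, oscillation of $f(y)/y$ or a non-transversal first touch-down, and verifying that the chosen multipliers and region boundaries --- expressed in $f$, $f_y$, $f_{yy}$ and $N$ --- actually close up. This is where the four-dimensionality of the system is felt.
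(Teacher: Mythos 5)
Your overall strategy (shooting in $\kappa$, invariant regions, energy identities) is the same as the paper's, but as written the proposal has two genuine gaps, both at the points you yourself flag as ``the main obstacle''. First, openness of your sets $S_\pm$ does not follow from continuous dependence, because you define them by asymptotic or terminal behaviour (touch-down at some finite $y$, super-linear growth or blow-up); continuous dependence only controls solutions on compact $y$-intervals, and openness of the touch-down set would in addition require that every first touch-down is transversal, which you do not establish. The paper's device is to pass to the scaling-adapted variables $\Phi=f/y^{4/m}$, $W=yf_y/f$, $Q$, $Z$ of \eqref{def-PWZQ}, obtaining the system \eqref{DS}, and to define the shooting sets $I_\pm$ by the \emph{finite-time} event of entering the open invariant regions $\Sigma_\pm$ (Lemma~\ref{lem-invariant}); openness is then immediate, disjointness follows from invariance, and nonemptiness from Lemma~\ref{lem-kapextreme} (where, incidentally, the constant $\sqrt{12\alpha}$ comes from an integral estimate at $y^*=\sqrt{\kappa/\alpha}$, not from the Taylor-expansion heuristic you give, which is not a proof that large $\kappa$ is trapped in a convexity region).

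Second, and more seriously, you define $S_0$ as ``the remaining set, on which $f(y)/y$ converges to a finite positive limit'', which is circular: the whole difficulty is to prove that a solution entering neither region is global and satisfies $f(y)/y\to a\in(0,\infty)$, ruling out oscillation of $f/y$, $a=0$, and finite-time breakdown. In the paper this occupies Lemmas~\ref{lem-bounds}--\ref{lem-lim1} and Proposition~\ref{prp-lim2}: the first energy $E_1$ (Lemma~\ref{lem-E1}) yields the a priori bound $0\le W\le 2$ for $\kappa\in I_0$, which gives global existence (Lemma~\ref{lem-globalsol}); the second energy $E_2$ of \eqref{E2y} (which does not appear in your proposal) produces the key dichotomy: either $E_2\le0$ for all $y$, whence a finite weighted dissipation integral and absolute convergence of $f(y)/y$, or eventually $QZ>0$, whence $W$ is monotone and converges to $1$; finally a time change $d\tau=d\xi/\Phi^{m/3}$ and a Levinson-type comparison of \eqref{DS-Y-tau} with the constant-coefficient system $e^{\tau A}$ identify the decaying modes and give $W\to1$, $Q,Z\to0$ with explicit rates, from which $a>0$ follows (for $m=4$ the case $a=0$ is excluded by a separate contradiction argument, and only then does your indicial computation, which does match the paper's polynomial $(1-z^2)(2+z)+b/a^4$ and $\Re(z_0)<-1$, apply). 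None of these steps is supplied in your proposal, so while the architecture is right, the proof of the theorem is not yet there.
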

Using the scaling invariance of the equation, for $m \in (0,4)$ we obtain a
solution $f$ of \eqref{ode}--\eqref{LBU} for any prescribed slope $a$ at
infinity (the corresponding solution does not necessarily satisfy the condition
$f(0) = 1$). For $m = 4$, the same type of rescaling leaves the slope at
infinity invariant. However, by rescaling in $y$ but not in $f$ (preserving the
condition $f(0) = 1$) we obtain a solution of the rescaled equation for some $b > 0$,
 with the desired slope at infinity. As a consequence of
Theorem~\ref{thm-lifting}, the thin-film equation \eqref{tfe} with initial data
$h_{\rm in} (x)= a|x|$ admits a solution which exhibits self-similar lifting for
any prescribed slope at infinity:
\begin{corollary}[Self-similar lifting for the thin-film equation] \label{cor-lifting} \text{}%
  \begin{enumerate}
  \item Let $m \in (0,4)$. Then, for any $a>0$, there exists a weak solution $h$ of \eqref{tfe} 
with initial data $h_{\rm in}(x)=a|x|$ of the form \eqref{ss-form}.
  \item Let $m = 4$. Then, for any $a>0$, there exists $b > 0$ and a self-similar
    weak solution $h$ of \eqref{tfe} of the form \eqref{ss-form}, defined for
    all $t \in \R$, and such that
  \begin{align*}
    h(x,t) =  a|x| + \Re \Big[K \Big(\frac{|x|}{e^{bt}} \Big)^{1+z_0}\Big]  + o\Big(\Big(\frac{|x|}{e^{bt}} \Big)^{1 + \Re( z_0)} \Big)
  \end{align*}
  uniformly as $t \to - \infty$ for $x \in [-R,R]$ for any fixed $R >0$. 
Here, $K, z_0 \in \C$ are the same constants as in Theorem~\ref{thm-lifting}.
  \end{enumerate}
\end{corollary}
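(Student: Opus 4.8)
\textbf{Proof proposal for Corollary~\ref{cor-lifting}.}

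The plan is to obtain the corollary directly from Theorem~\ref{thm-lifting} by inserting the ODE profile $f$ into the self-similar ansatz and then verifying, item by item, the requirements of Definition~\ref{def-weak}. First I would fix the slope at infinity. For $m\in(0,4)$ the scaling invariance $(y,f)\mapsto(\lambda y,\lambda^{4/m}f)$ of \eqref{ode} multiplies the limit in \eqref{LBU} by $\lambda^{(4-m)/m}$, and since $(4-m)/m>0$ this factor sweeps out all of $(0,\infty)$; hence for any target $a>0$ one selects $\lambda$ realising it, at the cost of abandoning the normalisation $f(0)=1$. For $m=4$ the same rescaling leaves the slope invariant, so instead I would rescale only in $y$ (which keeps $f(0)=1$ but changes the parameter $b$) and choose the $b>0$ producing the prescribed slope. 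With this $f$ I set $h(x,t)=t^{\alpha}f(|x|/t^{\alpha})$ as in \eqref{ss-form} for $m\in(0,4)$ and $h(x,t)=e^{bt}f(|x|/e^{bt})$ as in \eqref{ss-form4} for $m=4$, and reduce the statement to showing that $h$ is a weak solution with $h_{\rm in}(x)=a|x|$.

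The decisive structural point is that $f>0$ on the whole of $[0,\infty)$, so the touch-down point is genuinely lifted: at every interior time the positivity set is all of $\R$ and $h(\cdot,t)$ is strictly positive. Using $f_y(0)=f_{yyy}(0)=0$ one checks that the even extension makes $h$ of class $C^3$ across $x=0$, and a direct computation gives $h_x=\operatorname{sgn}(x)\,f_y$ and $h_{xxx}=\operatorname{sgn}(x)\,t^{-2\alpha}f_{yyy}$ (with $e^{-2bt}$ in place of $t^{-2\alpha}$ when $m=4$), which carry no Dirac contribution at the origin -- in contrast with the cone $a|x|$ of Remark~\ref{rem-diss}. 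Since \eqref{ode} was tailored so that the ansatz solves \eqref{tfe}, $h$ is a classical solution of the PDE at each interior time; the weak identity \eqref{weak} then follows by multiplying $h_t+(h^mh_{xxx})_x=0$ by a test function and integrating by parts over $[\epsilon,\infty)\times\R$, the spatial boundary terms vanishing by compact support and by the $C^3$ matching at $x=0$, and the contribution at $t=\epsilon$ passing, as $\epsilon\to t_0$, to the separately verified initial-data condition.

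Two quantitative checks remain. For the finite-dissipation requirement \eqref{diff-fin} the substitution $x=t^{\alpha}y$ yields, for each interior $t$,
\begin{align*}
  \int_{\R}|h^{m/2}h_{xxx}|^2\,dx \ = \ 2\,t^{\alpha(m-3)}\int_0^\infty f^m f_{yyy}^2\,dy,
\end{align*}
(with $t^\alpha$ replaced by $e^{bt}$ when $m=4$), so that finiteness reduces to convergence of $\int_0^\infty f^mf_{yyy}^2\,dy$; near $y=0$ this is immediate from smoothness and positivity, and at $y=\infty$ it is governed by the decay of $f_{yyy}$ against the linear growth $f\sim ay$. For the initial data I would use \eqref{LBU}: for fixed $x\neq0$ one has $y=|x|/t^{\alpha}\to\infty$ as $t\to t_0$, so $h(x,t)=t^{\alpha}f(y)\sim a|x|$, while the bound $f(y)\le C(1+y)$ promotes this to continuity of $h$ up to $t=t_0$, including at the origin where $h(0,t)=t^{\alpha}f(0)\to0=a\cdot0$. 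For $m=4$ I would substitute $y=|x|/e^{bt}$ into the expansion $f(y)=ay+\Re[Ky^{1+z_0}]+o(y^{1+\Re z_0})$ of Theorem~\ref{thm-lifting} and let $t\to-\infty$ to read off the stated expansion, uniformly for $x\in[-R,R]$.

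The main obstacle is the far-field behaviour of the profile, i.e. the regime $y\to\infty$ which corresponds to the approach to the initial time. The condition \eqref{LBU} that $f/y\to a$ is by itself insufficient: for the dissipation integral I need $f_{yyy}$ to decay fast enough that $f^mf_{yyy}^2\sim y^m f_{yyy}^2$ is integrable at infinity, and for $h_x$ to be bounded I need $f_y\to a$. When $m=4$ the explicit root $z_0$ with $\Re z_0<-1$ furnishes exactly the rates $f_y-a=O(y^{\Re z_0})$ and $|f_{yyy}|=O(y^{\Re z_0-2})$, whence $y^{m}f_{yyy}^2\sim y^{2\Re z_0}$ is integrable since $\Re z_0<-\tfrac12$; for $m\in(0,4)$ I would have to extract the analogous subdominant correction and the decay of $f_y,f_{yy},f_{yyy}$ from the far-field analysis underlying the shooting argument in the proof of Theorem~\ref{thm-lifting}. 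Matching these self-similar asymptotics to the integrability and continuity demands of Definition~\ref{def-weak} is the delicate part; the integration by parts and the remaining regularity statements are then routine.
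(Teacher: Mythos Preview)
Your approach is correct and is, in fact, considerably more detailed than what the paper itself provides: the paper gives no separate proof of the corollary, treating it as an immediate consequence of Theorem~\ref{thm-lifting} together with the scaling paragraph that precedes the statement. Your scaling argument to realise an arbitrary slope $a>0$ (via $(y,f)\mapsto(\lambda y,\lambda^{4/m}f)$ for $m\in(0,4)$, and via rescaling $y$ alone---hence $b$---for $m=4$) matches the paper verbatim, and the substitution of the profile expansion for $m=4$ into $h(x,t)=e^{bt}f(|x|/e^{bt})$ to obtain the stated asymptotics is exactly what is intended.

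The one place you flag as an obstacle---the far-field decay of $f_y$, $f_{yy}$, $f_{yyy}$ for $m\in(0,4)$ needed to verify \eqref{diff-fin} and the $H^1_{\rm loc}$ bound---is handled in the paper by Proposition~\ref{prp-lim2}(i): the estimate \eqref{asymptotics} gives $\|(W-1,Q,Z)\|\le Ce^{-K_0 e^{(4-m)\xi/3}}$, which in the original variables translates into super-exponential decay of $f_y-a$, $f_{yy}$, $f_{yyy}$ as $y\to\infty$. This is far stronger than what you need; indeed Corollary~\ref{corol-diss} uses precisely this to bound $\int_0^\infty f^m f_{yyy}^2\,dy$. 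So the ``delicate part'' you anticipate is already available, and once you invoke it the remaining verifications (continuity up to $t=t_0$, the weak identity \eqref{weak} via integration by parts) are, as you say, routine.
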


It is an open question whether the self-similar solution in Theorem~\ref{thm-lifting} is
unique. 

For $m \in (0,4)$, the solution in Corollary~\ref{cor-lifting} describes lifting
at time $t = 0$.  For $m=4$, the solution is defined for all $t \in \R$ and
  describes lifting in infinite time, in the sense that the cone $h(x)=a|x|$
is approached in the limit $t \to -\infty$. Note that the
corresponding solutions do not represent lifting if $m>4$, but instead, such
solutions would convergence to the cone as $t\to\infty$. In
fact, we do not expect that lifting is possible in this case. 
It is an open question whether the self-similar solution in Theorem~\ref{thm-lifting} is
unique.

Figure~\ref{evolution} illustrates the evolution given by the
self-similar solutions for the case $m=2$. 

Finally, we recall that the self-similar solutions obtained in this
paper can be understood as locally describing the merging of two adjacent 
droplets. In the Appendix, we present the formal asymptotics of two
initially stationary identical droplet that meet at an isolated
touch-down point. The construction of these solutions should follow from a
combination of the arguments in this paper with a suitable localization
argument.

\medskip

\textbf{Strategy of the proof.} The proof of Theorem~\ref{thm-lifting} is based on a shooting argument
 with shooting parameter $\kappa=f_{yy}(0)$. We first identify two invariant regions $\Sigma_\pm$. 
We show that any solution which enters the invariant $\Sigma_-$ region (related to small $\kap$) exhibits 
touch-down to zero at some finite $y^{\ast}>0$, i.e.
\begin{align}\label{TDB}
  f(y^{\ast}) = 0 && \text{and} && 
  f(y) >0 \quad\mbox{for all } y \in (0,y^{\ast})
\end{align}
for some $y^{\ast} \in(0,\infty)$. On the other hand, solutions which enter the
invariant region $\Sigma_+$ (related to large $\kappa$) exhibit at least
quadratic growth, i.e.
\begin{align}\label{FQBU} %
  f(y) \geq c y^2 && 
  && \text{for all $y \in (0,\infty)$}
\end{align}
and for some $c > 0$. We then invoke the energy formula
\begin{align}\label{bas-en}
  \frac{d}{dy}\Big( \alp\Big(f f_{y}-\frac{1}{2}y (f_{y})^2\Big) + f^{m}f_{yy}f_{yyy} \Big)
  =\frac{\alp}{2} f_{y}^2  + f^{m} f_{yyy}^{2},
\end{align}
(obtained by multiplying \eqref{ode} with $f_{yy}$) and another energy
  identity (see Lemma~\ref{lem-E1}) to show that there are solutions that fall 
into neither of these invariant regions. The proof is then concluded by
showing that these solutions necessarily satisfy the linear growth \eqref{LBU}.

\medskip

\textbf{Discussion:} The self-similar solutions found in the current work give a
specific mechanism of non-uniqueness related to topological changes of the
positivity set of the solution. Namely, we show that if $m\in (2,4)$ there are
at least two weak solutions, one that is the stationary solution
$h_{\rm in}(x)=a|x|$ and another solution exhibiting self-similar lifting. We note that, since the initial
condition for the self-similar solution represent stationary solutions, the
  lifting can in principle occur at any time, thus giving an infinite set of
solutions with the same initial data.  We believe that for other initial data
with, e.g., $h_{\rm in}(x) \approx |x|^{\beta}$ for small $|x|$, other type of 
self-similar lifting solutions may also exist.

  \medskip

  For weak solutions in the sense of Definition~\ref{def-weak} with initial data 
$h_{\rm in}(x)=a|x|$, our result suggests the following picture:
  \begin{enumerate}
  \item If $m \in (0,2]$, there exists a unique weak (self-similar) solution that lifts up.
  \item If $m \in (2,4)$, there exists a stationary solution as well as infinitely many 
solutions which exhibit lifting at arbitrary time.
  \item If $m \geq  4$, there exists a unique weak solution that is $h_{\rm in}(x)=a|x|$.
  \end{enumerate}
  The statements (i) and (iii) are conjectures while (ii) follows from Corollary \ref{cor-lifting}.  
While the stationary solutions $h(x)=a|x|$ for $m \in (2,4)$ are weak solutions with zero dissipation, 
our self-similar solutions have finite positive dissipation (see Corollary~\ref{corol-diss}). 
This raises the natural question as to whether the criterion of maximum dissipation yields uniqueness 
of weak solutions.

It is now pertinent to compare our results with the non-uniqueness result of weak solutions 
by Beretta, Bertsch and Dal Passo. More precisely, in \cite{BerettaBertschDalpasso-1995}, 
the authors show non-uniqueness for weak solutions of \eqref{tfe} on a finite 
interval for $m \in (0,5)$. They show the existence of entropy solutions for 
$m \in (0,5)$ which converge to their mean as $t\to\infty$ and become strictly positive in finite time 
for non-trivial initial data. Furthermore, they show that for any $m>0$ there exist weak solutions satisfying 
\eqref{energy}--\eqref{weak}, there are ones that preserve touch-down points in space and time if 
$m \in (0,3)$, and others that have fixed compact support in space and time if $m\geq3$.
 As a consequence, weak solutions with finite dissipation, i.e. solutions satisfying 
\eqref{weak} and \eqref{energy}, are not unique if $m \in (0,5)$. In particular, Theorem~3.2 in
 \cite{BerettaBertschDalpasso-1995} implies that for $m\geq 3$ there exists a
 positive solution if the initial data $h_{\rm in}$ is sufficiently large near
 the touch--down points. For initial data of the form $|x|^{\beta}$, the
 condition is satisfied if $\beta \in (0,\frac 1{m-3})$ for $m \neq 3$ (and
 arbitrary $\beta$ if $m = 3$). It would be interesting to show whether lifting
 can be realized by self-similar lifting with such initial data.  On the other
 hand, uniqueness of entropy solutions is still an open question (see also
 \cite[Theorem~5.1 and 5.2]{BertschGiacomelliKarali-2005}).

%%%%%%%%%%%%%%%%%%%%%%%%%%%%%%%%%%%%%%%%%%%%%%%%%%%%%%%%%%%%%%%%%%%%%%%%%%%%%%%%%%%%%%%%%%%%%%%%%%%%%%%%%%%%%%%%%%%
\section{Proof of the results} \label{sec-lifting}

%Throughout this section we assume $m\in(0,4]$. 

\subsection{Reformulation of the problem and invariant regions}

We write \eqref{ode} as a system of four first order equations, where the new
variables are adapted to the scaling invariance of \eqref{ode}.  Namely, we
introduce
\begin{align}\label{def-PWZQ}
  \Phi:=\frac{f}{y^{\frac{4}{m}}}, \quad W:=\frac{yf_{y}}{f},\quad Q:=y^{\frac{2}{3}}f^{\frac{m-3}{3}}f_{yy},\quad
  Z:=y^{\frac{1}{3}}f^{\frac{2m-3}{3}}f_{yyy},
\end{align}
and the independent variable $\xi\in\mathbb{R}$ by
\begin{align}\label{def-xi}
  y=e^{\xi}\,.
\end{align}
In terms of $(\Phi,W,Q,Z)$ and $\xi$, \eqref{ode} turns into the following system 
of four first order ordinary differential equations:
\begin{align}
  \begin{cases}
    \displaystyle \frac{d\Phi}{d\xi} = \Phi\Big(W - \frac{ 4}{m}\Big), \vspace{1.2ex} \\
    \displaystyle
    \frac{dW}{d\xi } = \frac{Q}{\Phi^{\frac{m}{3}} }+ W ( 1 - W ), \vspace{1.2ex} \\
    \displaystyle \frac{dQ}{d\xi } = \Big( \frac{2}{3}+\frac{m-3}{3}W \Big) Q + \frac{Z}{\Phi^{\frac{m}{3}}}, \vspace{1.2ex} \\
    \displaystyle \frac{dZ}{d\xi } = \alp \frac{W-1}{\Phi^{\frac{m}{3}}} + \Big(\frac{1}{3} - \frac{m+3}{3} W\Big)Z.
  \end{cases}
  \label{DS}%
\end{align}
We use the compact notation $X=(\Phi,W,Q,Z)$ to denote any point in the
phase-space associated to the dynamical system \eqref{DS}. We shall also write
$X(\cdot;\kappa)$ as the solution of \eqref{DS} that corresponds to the solution
of \eqref{ode}--\eqref{init:ss} when we want to explicitly indicate the
dependence on the value of $\kappa$.

\medskip

We find the following invariant regions of \eqref{DS}:
\begin{lemma}[The invariant regions $\Sigma_{\pm}$] \label{lem-invariant} \text{}%
  \begin{enumerate}
  \item The domains
    \begin{align*}
      \Sigma_{\pm} := \left\{ (\Phi, W,Q,Z) \in\R^{4} : \Phi > 0, \ \pm (W-1)> 0,\ \pm Q>0,\ \pm Z>0 \right\}
    \end{align*}
    are invariant regions for the dynamical system \eqref{DS}.
  \item If $X(\xi) \in\Sigma_{+}$ for some $\xi \in\R $, then the solution 
exhibits the behavior \eqref{FQBU}.
  \item If $X(\xi) \in\Sigma_{-}$ for some $\xi \in\R $, then the solution
    exhibits the behavior \eqref{TDB}.
  \end{enumerate}
\end{lemma}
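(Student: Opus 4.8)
\emph{Proof sketch.} The plan is to establish (i) by checking that the vector field \eqref{DS} points strictly inward along the boundary of $\Sigma_\pm$, and then to read (ii) and (iii) off the defining sign conditions, translated back into $f$ via \eqref{def-PWZQ} and integrated. For (i), note first that $\Phi$ satisfies the linear equation $d\Phi/d\xi=\Phi(W-4/m)$, so it keeps its sign; hence $\Phi>0$ is preserved and the face $\{\Phi=0\}$ is never reached. It remains to examine the faces where one of $W-1,Q,Z$ vanishes. From \eqref{DS}: on $\{W=1\}$ we have $dW/d\xi=Q\,\Phi^{-m/3}$; on $\{Q=0\}$, $dQ/d\xi=Z\,\Phi^{-m/3}$; on $\{Z=0\}$, $dZ/d\xi=\alpha(W-1)\,\Phi^{-m/3}$. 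For $\Sigma_+$ each of these is strictly positive whenever the remaining two coordinates lie in the open positivity region (strictly negative for $\Sigma_-$), so the flow points strictly inward on the open two-dimensional faces. The delicate point is the lower-dimensional strata where two or three of $W-1,Q,Z$ vanish simultaneously, since there the relevant one-sided derivative vanishes as well. To handle these I would use that the line $M:=\{(\Phi,1,0,0):\Phi>0\}$ is invariant for \eqref{DS} (on $M$ the coordinates $W,Q,Z$ stay frozen at $1,0,0$ while only $\Phi$ drifts), that the right-hand side of \eqref{DS} is smooth on $\{\Phi>0\}$, and hence that by uniqueness no trajectory lying off $M$ can reach $M$. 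If $X(\xi_0)\in\Sigma_+$ and $\xi_1>\xi_0$ were the first time $X$ touches $\partial\Sigma_+$, then at $\xi_1$ at least one of $W-1,Q,Z$ equals $0$ while all are $\geq0$; chasing the one-sided derivative signs through the three equations above (for instance $Z(\xi_1)=0\Rightarrow\dot Z(\xi_1)\leq0\Rightarrow W(\xi_1)=1\Rightarrow\dot W(\xi_1)\leq0\Rightarrow Q(\xi_1)=0$, and symmetrically in the other orders) forces $X(\xi_1)\in M$ — impossible, since $X$ lay in $\Sigma_+$, hence off $M$, on $[\xi_0,\xi_1)$. The case of $\Sigma_-$ is identical with reversed signs.

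For (ii) and (iii), suppose $X(\xi_0)\in\Sigma_\pm$ and put $y_0=e^{\xi_0}$; the change of variables \eqref{def-PWZQ} being defined at $\xi_0$ already entails $f>0$ on $(0,y_0]$. By (i) the trajectory stays in $\Sigma_\pm$ for all $\xi\geq\xi_0$, so on $[y_0,\infty)$ one has $f>0$ together with, in the $\Sigma_+$ case, $f_y>f/y>0$, $f_{yy}>0$, $f_{yyy}>0$, and in the $\Sigma_-$ case $f_{yy}<0$, $f_{yyy}<0$. In the $\Sigma_+$ case, $f_{yyy}>0$ makes $f_{yy}$ increasing, so $f_{yy}(y)\geq f_{yy}(y_0)=:c_1>0$; integrating twice and using $f(y_0),f_y(y_0)>0$ gives $f(y)\geq\frac{c_1}{2}(y-y_0)^2$ for $y\geq y_0$, hence $f(y)\geq c\,y^2$ for $y\geq2y_0$, while on $(0,2y_0]$ the continuous positive function $f$ (with $f(0)=1$) is bounded below by a positive constant, so after shrinking $c$ we obtain \eqref{FQBU} on all of $(0,\infty)$. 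In the $\Sigma_-$ case, $f_{yyy}<0$ makes $f_{yy}$ decreasing, so $f_{yy}(y)\leq f_{yy}(y_0)<0$; integrating twice forces $f$ below a downward parabola in $y$, so $f$ must vanish at some finite $y^\ast>y_0$, and taking $y^\ast$ to be the first zero together with positivity of $f$ on $(0,y_0]$ yields \eqref{TDB}.

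The main obstacle is precisely the corner/edge analysis in (i): the naive inward-pointing-field criterion degenerates exactly on the strata where several boundary inequalities are simultaneously active, and one must either run the invariant-line argument above or carry out a more tedious second-order Taylor expansion of $W-1,Q,Z$ at such points. A secondary, routine point implicitly used in (ii)--(iii) is the absence of finite-$y$ blow-up, needed to upgrade ``the solution stays in $\Sigma_\pm$ as long as it exists'' to statements holding on all of $(0,\infty)$ (respectively up to $y^\ast$); this follows by a continuation argument, using that \eqref{ode} gives $\frac{d}{dy}\big(f^m f_{yyy}\big)=\alpha f(W-1)$, whose sign on $\Sigma_\pm$ controls the remaining derivatives until $f$ degenerates.
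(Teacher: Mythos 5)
Your argument is correct and follows the paper's overall strategy (sign/invariance analysis for \eqref{DS} on the boundary of $\Sigma_\pm$, then translation back to $f$ via \eqref{def-PWZQ}), but two ingredients differ. For (i), the paper does not do a face-by-face inward-pointing check; it instead uses the chain of differential inequalities $W_\xi\ge W(1-W)$ (valid while $Q>0$), $Q_\xi\ge\bigl(\tfrac23+\tfrac{m-3}{3}W\bigr)Q$ (while $Z>0$), and $Z_\xi\ge\bigl(\tfrac13-\tfrac{m+3}{3}W\bigr)Z$ (while $W>1$): by comparison, none of the three quantities can reach its boundary value in finite $\xi$, so none can be the first to exit, which disposes of the degenerate corner implicitly. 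Your alternative --- a first-exit-time sign chase showing that the only attainable boundary point is on the line $M=\{(\Phi,1,0,0):\Phi>0\}$, which is invariant and hence unreachable by (backward) uniqueness of the smooth flow on $\{\Phi>0\}$ --- is equally valid and makes the corner issue explicit rather than implicit; it buys transparency at the cost of invoking uniqueness. Part (ii) is essentially identical to the paper's. For (iii) your route is genuinely different and simpler: the paper propagates Gr\"onwall-type bounds through the $\Phi$-, $Q$- and $W$-equations of \eqref{DS} to get $W_\xi\le -Ce^{\xi}$ for large $\xi$, hence $f_y<0$ and $f_{yy}<0$ eventually, and only then concludes touch-down; you observe directly that $Z<0$ gives $f_{yyy}<0$, so $f_{yy}(y)\le f_{yy}(y_0)<0$ for $y\ge y_0$, and the resulting downward parabola bound forces $f$ to vanish at some finite $y^\ast$, yielding \eqref{TDB} without any asymptotics for $W$. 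The continuation/global-existence point you flag (needed to pass from ``while the solution exists'' to statements on all of $(0,\infty)$, resp.\ up to $y^\ast$) is treated at the same informal level in the paper, and your sketch of it is adequate.
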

\begin{proof} %
  {\it (i)}: Suppose that $X(\xi_0) \in \Sigma_+$ for some $\xi_0 \in \R$. 
We first note that by the first equation in \eqref{DS}, the condition 
$\Phi > 0$ is preserved for $\xi > \xi_0$. From the second equation in
 \eqref{DS} and since $Q > 0$, we get $W_\xi \geq W(1-W)$ which implies that
 the property $W > 1$ is preserved as long as $Q>0$. From the third equation in 
\eqref{DS} and since $Z > 0$, we get that the property $Q > 0$ is preserved as
 long as $Z > 0$. From the fourth equation in \eqref{DS} and
 since $W>1$, we get $Z_\xi \geq (\frac 13 -(1+ \frac m3)W)Z$ which implies
 that the property $Z> 0$ is preserved as long as $W>1$. This shows that the
 region $\Sigma_+$ is invariant, i.e. $X(\xi) \in \Sigma_+$ for all
 $\xi \geq \xi_0$. The argument for the invariance of $\Sigma_-$ proceeds analogously.

  \medskip

  {\it (ii)}: Suppose that $X \in \Sigma_+$ for all $\xi > \xi_0$ and some 
$\xi_0 \in \R$. Since $Q>0$ and $Z>0$, we have $f_{yyy}>0$ and $f_{yy}>0$
 by the definition of $Z$ and $Q$, and hence $f_{yy}(y)>f_{yy}(y_0) > 0$ for
 all $y\geq y_0 = e^{\xi_0}$. In particular, $f \geq C y^2$ for all
 $y \geq y_0$ for some $C > 0$ which depends on $y_0$. Since $f > 0$ and
 $f$ is continuous in $(0,y_0)$, we obtain \eqref{FQBU}.

  \medskip
  
 {\it (iii)}: Suppose that $X\in\Sigma_-$ for all $\xi \geq \xi_0$ and some
 $\xi_0 \in \R$. Using the first equation in \eqref{DS}, we calculate
\begin{align} \label{ii-phi} %
    \Phi^{\frac m3}(\xi) %
    \stackrel{\eqref{DS}}= \Big(\Phi(\xi_0) \exp \Big(\int_{\xi_0}^\xi \left(W(s)-\frac 4m\right)ds \Big) \Big)^{\frac m3} %
  \\  = C \exp \Big( \int_{\xi_0}^\xi \left(\frac m3 W(s)-\frac 43\right)ds \Big).
\end{align}
  Since $Z<0$, from the third equation in \eqref{DS} we obtain
 \begin{align} \label{ii-Q} %
    Q(\xi)<-C \exp \Big( \int_{\xi_0}^\xi \left(\frac 23 + \frac {m-3}3 W(s) \right)ds \Big) < 0 &&\text{for $\xi\geq \xi_0$}.
 \end{align}
  Using the second equation in \eqref{DS} together with \eqref{ii-phi} and \eqref{ii-Q}, we obtain
  \begin{align*}
    \frac{dW(\xi)}{d\xi} < -C \exp \Big(\int_{\xi_0}^\xi (2-W(s))ds \Big) +W(\xi)(1-W(\xi)) && \text{ for $\xi\geq\xi_0$}.
  \end{align*}
  Since $W<1$, this implies $W_\xi \leq -Ce^{\xi}$ for $\xi$ sufficiently
  large. It follows that $W < 0$ eventually and hence $f_y <0$ for sufficiently
  large $y = e^\xi$. Since $Q < 0$ and hence $f_{yy} < 0$, it follows that
  there is $y^* > 0$ with $f(y^*) = 0$.
\end{proof}
The next lemma relates the value of $\kappa$ to the invariant regions
$\Sigma_\pm$:
\begin{lemma}[The case of large and small $\kappa$] \label{lem-kapextreme} \text{} %
  \begin{enumerate}
  \item If $\kappa<0$, then $X(\cdot;\kap)$ enters $\Sigma_{-}$, i.e. there is $\xi_0 \in \R$ with $X(\xi_0;\kap) \in\Sigma_{-}$.
  \item If $\kappa >\sqrt{12\alpha}$, then $X(\cdot;\kappa)$ enters $\Sigma_{+}$, i.e. there is $\xi_0 \in \R$ with
    $X(\xi_0;\kappa) \in\Sigma_{+}$.
  \end{enumerate}
\end{lemma}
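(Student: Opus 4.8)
The plan is to analyze the behavior of the solution $X(\cdot;\kappa)$ near $\xi = -\infty$ (equivalently $y \to 0^+$) by matching with the Taylor expansion of $f$ around $y = 0$ determined by the initial conditions \eqref{init:ss}, and then to show that for the two extreme ranges of $\kappa$ the trajectory is pushed into the respective invariant region. First I would compute the asymptotics of $(\Phi, W, Q, Z)$ as $y \to 0^+$. From $f(0) = 1$, $f_y(0) = 0$, $f_{yy}(0) = \kappa$, $f_{yyy}(0) = 0$ we get $f = 1 + \tfrac{\kappa}{2} y^2 + O(y^3)$, $f_y = \kappa y + O(y^2)$, $f_{yy} = \kappa + O(y)$, and from \eqref{ode} we can solve for $f_{yyyy}(0)$: differentiating once and evaluating at $y=0$ gives $\alpha(f - yf_y)|_0 = \alpha$ for the first term and $(f^m f_{yyy})_y|_0 = f^m f_{yyyy}|_0 = f_{yyyy}(0)$ (using $f_{yyy}(0) = 0$, $f(0)=1$), whence $f_{yyyy}(0) = -\alpha$ and $f_{yyy} = -\alpha y + O(y^2)$. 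Plugging these into \eqref{def-PWZQ}: $\Phi \sim y^{-4/m} \to +\infty$, $W = yf_y/f \sim \kappa y^2 \to 0$, $Q = y^{2/3} f^{(m-3)/3} f_{yy} \sim \kappa y^{2/3} \to 0$, and $Z = y^{1/3} f^{(2m-3)/3} f_{yyy} \sim -\alpha y^{4/3} \to 0$. So as $\xi \to -\infty$, the trajectory emanates from $(\Phi, W, Q, Z) \to (+\infty, 0, 0^{+}, 0^{-})$ when $\kappa > 0$ (and $0^{-}$ for $Q$ when $\kappa < 0$), with the signs of $Q$ and $Z$ near $-\infty$ determined by the signs of $\kappa$ and $-\alpha$ respectively.

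For part (i), $\kappa < 0$: near $\xi = -\infty$ we have $Q < 0$ (sign of $\kappa$) and $Z < 0$ (sign of $-\alpha$, since $\alpha > 0$), and $W < 1$ trivially since $W \to 0$. Thus the only condition defining $\Sigma_-$ that might fail is... none: all four conditions $\Phi > 0$, $W - 1 < 0$, $Q < 0$, $Z < 0$ already hold for $\xi$ sufficiently negative. Hence $X(\xi_0;\kappa) \in \Sigma_-$ for any sufficiently negative $\xi_0$, and by Lemma~\ref{lem-invariant}(i) the trajectory stays there. This part is essentially immediate once the small-$y$ asymptotics are in hand.

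For part (ii), $\kappa > \sqrt{12\alpha}$: here $Q > 0$ near $-\infty$ (sign of $\kappa > 0$), and $\Phi > 0$ always, so to enter $\Sigma_+$ we must show that $W$ crosses $1$ and $Z$ becomes positive — and that this happens before $Q$ or the other quantities lose their sign. The natural approach is to return to the original variable $y$ and track $f_{yy}$ and $f_{yyy}$ directly. Since $f_{yyy}$ starts at $0$ with $f_{yyyy}(0) = -\alpha < 0$, initially $f_{yyy} < 0$, so $Z < 0$ at first; we must show $f_{yyy}$ turns around and becomes positive while $f_{yy}$ is still positive (so $Q$ stays positive). The condition $\kappa > \sqrt{12\alpha}$ should be exactly what guarantees this: heuristically, $f \approx 1 + \tfrac{\kappa}{2}y^2$ stays close to $1$ for a while, so \eqref{ode} reads approximately $f_{yyyy} \approx \alpha(yf_y - f) \approx \alpha(\kappa y^2 - 1 - \tfrac\kappa2 y^2) = \alpha(\tfrac{\kappa}{2} y^2 - 1)$, which becomes positive once $y^2 > 2/\kappa$; integrating twice, $f_{yyy}$ recovers to $0$ and then becomes positive around $y \sim \sqrt{6/\kappa}$ or so (the factor $12$ presumably comes out of this double integration — $f_{yyy}(y) \approx -\alpha y + \alpha(\tfrac{\kappa}{6}y^3 - y) = \alpha(\tfrac{\kappa}{6}y^3 - 2y)$, wait, let me not grind — the point is the threshold $\sqrt{12\alpha}$ is tuned so that the turnaround of $f_{yyy}$ to positivity is completed while $f_{yy}$ is still positive and large). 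Once $f_{yyy} > 0$ and $f_{yy} > 0$ we have $Q, Z > 0$; and then $W$ must eventually exceed $1$: with $f_{yy}$ bounded below by a positive constant, $f$ grows at least quadratically along this stretch so $W = yf_y/f$ can be shown to increase past $1$. One must check these sign changes occur in a compatible order and before $f$ can vanish or $\Phi$ degenerate — but $\Phi > 0$ is automatic and $f$ increasing rules out vanishing.

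The main obstacle is part (ii): one needs a rigorous quantitative argument, not just the heuristic, that for $\kappa > \sqrt{12\alpha}$ the solution $f$ stays close enough to the parabola $1 + \tfrac{\kappa}{2}y^2$ on the relevant $y$-interval for the sign changes of $f_{yyy}$ and of $W - 1$ to be forced, with the $f^m$ mobility factor (which is $\geq 1$ since $f \geq 1$ as long as $f_{yy} \geq 0$ and $f$ is increasing) controlled. I would set this up as a continuity/bootstrap argument: define $y_1$ as the first $y$ where $f_{yy} = \tfrac{\kappa}{2}$ (it stays above half its initial value for a while), show $f \leq 2(1 + \tfrac{\kappa}{2}y^2)$ and $f \geq 1$ on $[0, y_1]$, insert these bounds into the integral form of \eqref{ode} to get $f_{yyy}(y) > 0$ at some $y_2 < y_1$, then show $W(y_2) $ or $W$ at a slightly later point exceeds $1$, giving a point in $\Sigma_+$. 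The bookkeeping of constants to confirm the precise threshold $\sqrt{12\alpha}$ is the delicate part.
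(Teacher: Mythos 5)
Your part (i) is correct and is exactly the paper's argument: from the Taylor expansion at $y=0$ one gets $W\approx\kappa y^{2}$, $Q\approx\kappa y^{2/3}$, $Z\approx-\alpha y^{4/3}$, so for $\kappa<0$ the trajectory lies in $\Sigma_{-}$ for all sufficiently negative $\xi$.

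Part (ii), however, is only a plan, and the step you defer (``the bookkeeping of constants to confirm the precise threshold $\sqrt{12\alpha}$'') is precisely the content of the lemma, so as written there is a genuine gap. The quantitative step is in fact short and is how the paper proceeds: integrating \eqref{ode} once and using $(yf_y-f)_y=yf_{yy}$, $(yf_y-f)(0)=-1$ gives $f_{yyy}(y)=\frac{\alpha}{f^{m}(y)}\big(\int_0^{y}\!\int_0^{s} r f_{rr}(r)\,dr\,ds-y\big)$. On the maximal interval where $f_{yy}>\kappa/2$ one has $f\geq 1$ and $f_y>0$, hence $f_{yyy}>-\alpha y$ and $f_{yy}\geq \kappa-\alpha y^{2}/2$; this barrier shows the interval contains $y^{*}:=\sqrt{\kappa/\alpha}$. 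Note that no upper bound on $f$ (your $f\leq 2(1+\tfrac{\kappa}{2}y^{2})$) is needed, since only the sign of the bracket matters, the prefactor $\alpha/f^{m}$ being positive.

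The second, more serious issue is that your plan does not secure the three conditions $Q>0$, $Z>0$, $W>1$ at a \emph{common} point. Your fallback ``show $W(y_2)$ or $W$ at a slightly later point exceeds $1$'' is not harmless: by the fourth equation of \eqref{DS}, positivity of $Z$ is only self-sustaining while $W>1$, so between $y_2$ and that later point $Z$ (and then $Q$) may change sign again; knowing $Q,Z>0$ at one instant and that $W$ eventually exceeds $1$ does not produce a point of $\Sigma_{+}$. The paper closes this by evaluating everything at the single point $y^{*}$: there $f_{yy}(y^{*})\geq\kappa/2>0$, $f_{yyy}(y^{*})\geq\frac{\alpha y^{*}}{f^{m}(y^{*})}\big(\frac{\kappa^{2}}{12\alpha}-1\big)>0$ (the double integration of $\tfrac{\kappa}{2}r$ gives $\tfrac{\kappa}{12}(y^{*})^{3}$, which is where the $12$ comes from, confirming your guess), and $y^{*}f_y(y^{*})-f(y^{*})\geq\frac{\kappa^{2}}{4\alpha}-1>0$, all simultaneously positive exactly when $\kappa>\sqrt{12\alpha}$, so $X(\xi^{*};\kappa)\in\Sigma_{+}$ with $y^{*}=e^{\xi^{*}}$. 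So your strategy is the right one, but the decisive computation and the simultaneity argument still have to be carried out.
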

\begin{proof} 
  {\it (i):} If $\kappa < 0$, \eqref{ode} and \eqref{init:ss} imply that for a small enough 
$y_0 = e^{\xi_0} > 0$ we have $W(\xi_0) < 0$, $Q(\xi_0) < 0$ and $Z(\xi_0) <0$ and hence 
$X(\xi_0;\kap) \in \Sigma_-$.

  \medskip

{\it (ii):} By integrating \eqref{ode} once, we get $f^m(y)f_{yyy}(y) =\alp \int_0^y( sf_s(s) - f(s)) ds$. 
Since  $(y f_{y} - f)_y = yf_{yy}$ and $(yf_y - f)(0) = -1$, we obtain
  \begin{align}\label{eq-int}
    f_{yyy}(y) =\frac{\alp}{f^m(y)} \Big(\int_0^y \int_0^s r f_{rr}(r) \, dr ds  - y \Big)\,.
  \end{align}
Let $\overline y \in (0,\infty]$ be the largest value such that the
 solution is defined and $f_{yy}(y) > \frac \kap 2$ for all
 $y \in I := (0,\overline y)$. For $y \in I$, we have $f_y(y) > 0$ and 
$f(y) > 1$. By \eqref{eq-int} we also have $f_{yyy}(y) > -\alpha y$ and hence 
$f_{yy}(y) \geq \kap -\alpha \frac{y^2}2$ for $y \in I$. In turn, this implies 
$y^* := \sqrt{\kap /\alpha} \in I$. We thus have 
$f_{yy}(y) \geq \frac \kap 2$ for all $y < y^*$ and hence
\begin{align} %
    f_{yyy}(y^*) %
    &
\stackrel{\eqref{eq-int}}{\geq}
\frac{\alp y^* }{f^m(y^*)} \Big( \frac{\kap^{2}}{12\alpha} - 1 \Big) > 0 \quad
    \text{if}\quad \kap >\sqrt{12\alpha}.
\end{align}
This implies $Z(\xi^*) > 0$ where $y^* = e^{\xi^*}$. Clearly, we have 
$f_{yy}(y^*) > 0$ and hence $Q(\xi^*) > 0$. We also calculate
\begin{align*} 
    y^* f_y(y^*) - f(y^*) =
    \int_0^{y^*} s f_{ss} \ ds - 1 \geq \frac {\kap^2 }{4\alpha}
    - 1 > 0 &&\text{if $\kap > \sqrt{12\alpha}$, }
\end{align*}
which implies $W(\xi^*) > 1$ by the definition \eqref{def-PWZQ} of $W$. 
The above calculations hence yield $X(\xi^*;\kappa) \in \Sigma_+$ if $\kappa >\sqrt{12\alpha}$.
\end{proof}
We can now conclude that there exist values of $\kappa$ such that the
corresponding solutions to \eqref{ode}--\eqref{init:ss} enter neither of the
invariant regions:
\begin{lemma}[Shooting argument] \label{lem-shooting} %
  Let
  \begin{align*}
    & I_{\pm} := \left\{ \kappa\in\mathbb{R}:X\left( \xi;\kappa\right) \in
      \Sigma_{\pm}\ \text{for some }\xi>0\right\}, %\\
    && I_{0} := \R \backslash(I_{+} \cup I_{-}).
  \end{align*}
Then, the sets $I_{\pm} \subset\R$ are open, nonempty and disjoint and, hence, $I_{0} \neq\emptyset$.
\end{lemma}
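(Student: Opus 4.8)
The plan is to establish three things: that $I_{\pm}$ are open, that they are nonempty, that they are disjoint; the conclusion $I_0 \neq \emptyset$ then follows since $\R$ is connected and cannot be covered by two disjoint open sets unless one of them is empty (and both are nonempty here). Disjointness is immediate: the regions $\Sigma_{+}$ and $\Sigma_{-}$ are themselves disjoint (indeed $W - 1$ has opposite signs in the two), and once a trajectory enters one invariant region it stays there by Lemma~\ref{lem-invariant}(i), so no trajectory can enter both. Nonemptiness of $I_{-}$ and $I_{+}$ is exactly the content of Lemma~\ref{lem-kapextreme}: any $\kappa < 0$ lies in $I_{-}$ and any $\kappa > \sqrt{12\alpha}$ lies in $I_{+}$.

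The substantive point is openness. First I would note that the solution $X(\cdot\,;\kappa)$ of \eqref{DS} depends continuously on $\kappa$, on compact $\xi$-intervals on which it exists; this is standard continuous dependence for ODEs, applied to the original problem \eqref{ode}--\eqref{init:ss} (where the dependence on $\kappa = f_{yy}(0)$ is smooth) and then transported through the change of variables \eqref{def-PWZQ}--\eqref{def-xi}, which is a diffeomorphism on the relevant range where $f > 0$. Now suppose $\kappa_* \in I_{+}$, so $X(\xi_*;\kappa_*) \in \Sigma_{+}$ for some $\xi_* > 0$. Since $\Sigma_{+}$ is an \emph{open} subset of $\R^4$ and the map $\kappa \mapsto X(\xi_*;\kappa)$ is continuous at $\kappa_*$ (the solution exists on a neighborhood of $\xi_*$ for $\kappa$ near $\kappa_*$, by continuous dependence and the fact that $f$ stays bounded away from $0$ near $y_* = e^{\xi_*}$ for $\kappa$ close to $\kappa_*$), there is a neighborhood of $\kappa_*$ all of whose trajectories satisfy $X(\xi_*;\kappa) \in \Sigma_{+}$; hence that neighborhood lies in $I_{+}$. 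The identical argument with $\Sigma_{-}$ in place of $\Sigma_{+}$ shows $I_{-}$ is open.

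The step I expect to require the most care is the continuous-dependence-up-to-$\xi_*$ claim: one must ensure that, for $\kappa$ near $\kappa_*$, the solution of \eqref{ode}--\eqref{init:ss} is still defined on an interval containing $y_* = e^{\xi_*}$ with $f > 0$ there, so that the change of variables to $(\Phi,W,Q,Z)$ makes sense and the system \eqref{DS} is non-singular along the trajectory up to $\xi_*$. This follows because $f(\,\cdot\,;\kappa_*) > 0$ on the compact interval $[0, y_*]$, so by continuity $f(\,\cdot\,;\kappa) > 0$ there too for $\kappa$ in a small neighborhood, and the solution cannot blow up on a compact interval where $f$ is bounded below (the coefficients of \eqref{ode} are smooth where $f>0$). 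Once this is in hand, openness is just the observation that $\Sigma_{\pm}$ are open sets and preimages of open sets under continuous maps are open. With $I_{+}$ and $I_{-}$ open, nonempty and disjoint, and $\R$ connected, their union is a proper subset of $\R$, so $I_0 = \R \setminus (I_{+} \cup I_{-}) \neq \emptyset$.
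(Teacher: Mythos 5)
Your argument is correct and follows essentially the same route as the paper: openness of $I_{\pm}$ via continuous dependence on $\kappa$ together with the openness of $\Sigma_{\pm}$, disjointness via Lemma~\ref{lem-invariant}, nonemptiness via Lemma~\ref{lem-kapextreme}, and $I_0\neq\emptyset$ from the connectedness of $\R$. Your extra care about the solution of \eqref{ode}--\eqref{init:ss} staying positive on $[0,y_*]$ for nearby $\kappa$, so that the change of variables \eqref{def-PWZQ} remains valid, is a detail the paper leaves implicit but is entirely consistent with its proof.
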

\begin{proof}
By definition, the sets $\Sigma_\pm$ are open. By the continuous dependence of 
solutions on the initial data, the sets $I_\pm$ are hence also open. By 
Lemma~\ref{lem-invariant}, the sets $I_\pm$ are disjoint. By Lemma~\ref{lem-kapextreme}, 
the sets $I_\pm$ are non-empty. 
%Finally, the statement $I_0 \neq \emptyset$ follows since $\mathbb{R}$ is connected.
\end{proof}

In order to conclude the proof of Theorem~\ref{thm-lifting} we have to characterize the behavior 
as $\xi\to\infty$ of the solutions with initial condition in $I_0$ and show that this corresponds to 
\eqref{LBU}. We do this in the next two sections.

\subsection{Energy formulas and corollaries} \label{sec-energies} %

The following two energy formulas are essential for the proof of Theorem~\ref{thm-lifting}:
\begin{lemma} [Energy Formula I]\label{lem-E1} %
We define $E_{1}(y)$ by
\begin{align}\label{E1-y}
    E_{1}(y) \ := \alp\Big(f f_{y}-\frac{1}{2}y f_{y}^2\Big) + f^{m}f_{yy}f_{yyy}.
\end{align}
Then, if $f$ is a smooth solution of \eqref{ode}, $E_{1}$ is an increasing quantity with
\begin{align}\label{D1-y}
    \frac{d}{dy} E_{1}(y) =\frac{\alp}{2}f_{y}^{2} + f^{m} f_{yyy}^{2}\,.
\end{align}
In particular, if $f$ solves \eqref{ode}--\eqref{init:ss}, then 
$E_1(0) = 0$ and $E_1(y) > 0$ for all $y > 0$.
\end{lemma}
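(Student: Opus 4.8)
The plan is to verify the differential identity \eqref{D1-y} by direct computation and then deduce the boundary and sign statements from it. First I would differentiate $E_1(y)$ term by term. Differentiating $\alpha(ff_y - \tfrac12 y f_y^2)$ gives $\alpha(f_y^2 + f f_{yy} - \tfrac12 f_y^2 - y f_y f_{yy}) = \tfrac{\alpha}{2}f_y^2 + \alpha f_{yy}(f - y f_y)$. Differentiating $f^m f_{yy} f_{yyy}$ gives $m f^{m-1} f_y f_{yy} f_{yyy} + f^m f_{yyy}^2 + f^m f_{yy} f_{yyyy}$; the first and third terms combine as $f_{yy}\,(f^m f_{yyy})_y$. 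Adding everything, the terms involving $f_{yy}$ collect into $f_{yy}\big[\alpha(f - y f_y) + (f^m f_{yyy})_y\big]$, which vanishes identically by the ODE \eqref{ode}. What remains is exactly $\tfrac{\alpha}{2}f_y^2 + f^m f_{yyy}^2$, proving \eqref{D1-y}. (This is of course just the statement, recorded in the strategy section, that \eqref{D1-y} is obtained by multiplying \eqref{ode} by $f_{yy}$.)

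Next I would read off monotonicity: since $\alpha > 0$, $f > 0$ on the positivity interval, and $m > 0$, the right-hand side $\tfrac{\alpha}{2}f_y^2 + f^m f_{yyy}^2$ is nonnegative, so $E_1$ is nondecreasing wherever the smooth solution exists and stays positive. For the boundary value, I would substitute the initial conditions \eqref{init:ss}: at $y = 0$ we have $f_y(0) = 0$ and $f_{yyy}(0) = 0$, so the term $\alpha(f f_y - \tfrac12 y f_y^2)$ vanishes and the term $f^m f_{yy} f_{yyy}$ vanishes as well; hence $E_1(0) = 0$.

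Finally, for strict positivity $E_1(y) > 0$ for $y > 0$, monotonicity together with $E_1(0) = 0$ only gives $E_1(y) \geq 0$, so I need to rule out $E_1 \equiv 0$ on a right-neighborhood of $0$. The cleanest way is to note that $E_1(y) = \int_0^y \big(\tfrac{\alpha}{2} f_y^2 + f^m f_{yyy}^2\big)\,ds$, and this vanishes for some $y > 0$ only if $f_y \equiv 0$ and $f_{yyy} \equiv 0$ on $(0,y)$; but $f_y \equiv 0$ forces $f \equiv 1$, which does not solve \eqref{ode} since then $\alpha(f - y f_y) = \alpha \neq 0$ while $(f^m f_{yyy})_y = 0$ — a contradiction. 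Alternatively, and perhaps more transparently, from \eqref{init:ss} with $\kappa > 0$ we have $f_{yy}(0) = \kappa > 0$, so $f_y(y) = \kappa y + o(y) > 0$ for small $y > 0$, which makes the integrand strictly positive on a small interval and hence $E_1(y) > 0$ there; monotonicity then propagates this to all $y > 0$ in the existence interval. I do not anticipate a serious obstacle here: the only point requiring a little care is making the "not identically zero" argument rigorous, i.e. excluding the degenerate case, which the initial condition $\kappa>0$ handles immediately.
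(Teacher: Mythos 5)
Your proposal is correct and follows essentially the same route as the paper: the identity \eqref{D1-y} is obtained by multiplying \eqref{ode} by $f_{yy}$ and recognizing total derivatives, and the statements $E_1(0)=0$, $E_1(y)>0$ follow from the initial conditions \eqref{init:ss} (with $\kappa>0$ giving $f_y>0$ near $0$) together with monotonicity. Your treatment of the strict positivity is in fact slightly more explicit than the paper's, which leaves that step implicit.
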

\begin{proof}
Multiplying \eqref{ode} by $f_{yy}$ we get
\begin{align*}
    0 &= \alp\left(f f_{yy} - yf_y f_{yy} \right)+ (f^m f_{yyy})_y f_{yy} \\ %
    &= \big( \alp\big(f f_{y} - \frac 12 y f_y^2 \big)+ f^m f_{yyy} f_{yy}\big)_y + \alp\big(- f_y^2 +
      \frac 12 f_y^2\big) - f^m f_{yyy}^2,
\end{align*}
and rearranging terms we obtain the result.
\end{proof}

\begin{lemma}[Energy Formula II]\label{lem-E2}
We define $E_{2}(y)$ by
\begin{align}\label{E2y}
  E_{2}(y)\ := - \frac{\alp}{2y} \left(f - y f_y \right)^2 + f^{m}f_{yy}f_{yyy}.
\end{align}
Then, if $f$ is a smooth solution of \eqref{ode}, $E_{2}$ is an increasing quantity with
\begin{align}\label{D2y} %
  \frac{d}{dy}E_{2}(y) = \frac{\alp}{2 y^2}\left( f - y f_{y} \right)^2 + f^{m}f_{yyy}^{2}.
\end{align}
In particular, if $f$ is a solution of \eqref{ode}--\eqref{init:ss} then $\lim_{y\to 0^+} E_2(y)=-\infty$.
\end{lemma}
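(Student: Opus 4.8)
The plan is to verify the differential identity \eqref{D2y} by a direct computation, and then to read off the two limiting statements as $y\to 0^+$ from the initial conditions \eqref{init:ss}. Concretely, I would start from \eqref{ode} written as $\alpha(f-yf_y) = -(f^m f_{yyy})_y$ and differentiate the first term of $E_2$: using $(f-yf_y)_y = -y f_{yy}$ one gets
\begin{align*}
  \frac{d}{dy}\Big(-\frac{\alpha}{2y}(f-yf_y)^2\Big)
  = \frac{\alpha}{2y^2}(f-yf_y)^2 - \frac{\alpha}{y}(f-yf_y)(-yf_{yy})
  = \frac{\alpha}{2y^2}(f-yf_y)^2 + \alpha(f-yf_y)f_{yy}.
\end{align*}
For the second term, $\big(f^m f_{yy}f_{yyy}\big)_y = (f^m f_{yyy})_y f_{yy} + f^m f_{yyy}^2 = -\alpha(f-yf_y)f_{yy} + f^m f_{yyy}^2$ by \eqref{ode}. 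Adding the two contributions, the cross terms $\pm\alpha(f-yf_y)f_{yy}$ cancel and we are left exactly with \eqref{D2y}. Since both summands on the right-hand side of \eqref{D2y} are nonnegative, $E_2$ is nondecreasing.

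For the boundary behavior, I would Taylor-expand the solution of \eqref{ode}--\eqref{init:ss} near $y=0$. From \eqref{init:ss} we have $f(0)=1$, $f_y(0)=0$, $f_{yy}(0)=\kappa$, $f_{yyy}(0)=0$, so that $f(y) = 1 + \tfrac{\kappa}{2}y^2 + O(y^3)$, $f_y(y) = \kappa y + O(y^2)$, and hence $f - y f_y = 1 - \tfrac{\kappa}{2}y^2 + O(y^3) \to 1$ as $y\to 0^+$. Also $f^m f_{yy}f_{yyy}$ is bounded near $0$ since $f_{yyy}(y) = O(y)$ (indeed from \eqref{eq-int}, or just smoothness). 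Therefore the term $f^m f_{yy}f_{yyy}$ stays finite while $-\frac{\alpha}{2y}(f-yf_y)^2 \sim -\frac{\alpha}{2y} \to -\infty$, which gives $\lim_{y\to 0^+} E_2(y) = -\infty$. Finally, since $E_2$ is increasing, this is consistent with $E_2$ being finite and growing for $y>0$.

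There is no real obstacle here; this is a routine computation. The only mild care needed is that $E_2$ has a $1/y$ singularity at the origin, so one must justify that the solution is smooth enough near $y=0$ for the expansion to be valid — but this follows from the fact that $f$ solves the ODE \eqref{ode} with analytic right-hand side on $\{f>0\}$ together with the prescribed even initial data, so $f$ is smooth in a neighborhood of $y=0$ and the Taylor expansion above is legitimate. (Alternatively one invokes \eqref{eq-int}, which gives $f_{yyy}(y)=O(y)$ directly.) The statement that $E_2$ is "increasing" should be read as nondecreasing, with strict monotonicity wherever $f_y\not\equiv 0$ or $f_{yyy}\not\equiv 0$ on the relevant interval.
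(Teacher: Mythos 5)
Your computation is correct: differentiating $-\frac{\alp}{2y}(f-yf_y)^2$ with $(f-yf_y)_y=-yf_{yy}$, and using \eqref{ode} in the form $(f^m f_{yyy})_y=-\alp(f-yf_y)$ for the second term, does make the cross terms $\pm\alp(f-yf_y)f_{yy}$ cancel and gives \eqref{D2y}; and since $f(0)=1>0$ the ODE is non-degenerate near $y=0$, so the Taylor expansion from \eqref{init:ss} (or \eqref{eq-int}) legitimately gives $f-yf_y\to 1$ and $f^m f_{yy}f_{yyy}$ bounded, hence $E_2(y)\sim-\frac{\alp}{2y}\to-\infty$. The route differs slightly from the paper's: there, \eqref{D2y} is not verified from scratch but obtained from Energy Formula I (Lemma~\ref{lem-E1}) by adding the elementary identity $\alp\frac{d}{dy}\bigl(-\frac{f^2}{2y}\bigr)=\alp\bigl(-\frac{ff_y}{y}+\frac{f^2}{2y^2}\bigr)$ to \eqref{D1-y} and completing the square on both sides, which in particular exhibits the relation $E_2=E_1-\frac{\alp f^2}{2y}$; with $E_1(0)=0$ and $f(0)=1$ this makes the limit $E_2(y)\to-\infty$ as $y\to 0^+$ immediate, whereas you establish it by a direct (and equally valid) local expansion. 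Your argument is self-contained and does not use Lemma~\ref{lem-E1} at all; the paper's argument is shorter given that lemma and makes the structural link between the two energies explicit. One trivial slip: strict growth of $E_2$ holds where $f-yf_y\neq 0$ or $f_{yyy}\neq 0$, not where $f_y\neq 0$, but this plays no role in the statement.
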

\begin{proof}
We start from the energy formula of Lemma~\ref{lem-E1}. We calculate
  \begin{align} \label{sq-term} %
    \alp \frac d{dy} \Big(- \frac{f^2}{2y}\Big) = \alp \Big( - \frac{f f_y}{y} + \frac{f^2}{2y^2} \Big).
  \end{align}
We then take the sum of this identity with the first energy equality \eqref{D1-y},  %
\begin{equation*} %
    \frac{d}{dy} \Big( \alp \big(f f_{y}-\frac{1}{2}y f_{y}^2\big) + f^{m}f_{yy}f_{yyy} \Big) %
    =\frac{\alp}{2}f_{y}^{2} + f^{m}f_{yyy}^{2}.
\end{equation*}
Taking the sum completes the squares in both sides of the resulting equation 
and thus yields the assertion of the lemma.
\end{proof}
\begin{remark}\label{E:DS}
In the variables \eqref{def-PWZQ} and \eqref{def-xi}, the energy formula for $E_{1}$ of Lemma~\ref{lem-E1} becomes
  \begin{align}
    \frac{d}{d\xi}\Big(e^{\frac{8-m}{m}\xi}\Phi^{2}\big(-\frac{\alp}{2}W(W-2)+QZ\big) \Big)
    = e^{\frac{8-m}{m}\xi}\Phi^{2} \Big( \frac{\alp}{2} W^2+ \frac{Z^{2}}{ \Phi^{\frac3m}}\Big)
    \label{E1}
  \end{align}
  and the energy formula for $E_{2}$ of Lemma~\ref{lem-E2} takes the form
  \begin{align}
    \frac{d}{d\xi} \Big( e^{\frac{8-m}{m}\xi}\Phi^{2}\big(-\frac{\alp}{2} ( W - 1 )^{2} + QZ\big) \Big) =
    e^{\frac{8-m}{m}\xi} \Phi^{2} \Big( \frac{\alp}{2}(W - 1)^{2}+ \frac{Z^{2}}{ \Phi^{\frac3m}} \Big).
    \label{E2}%
  \end{align}
\end{remark}
With the aid of these energies we now show the following lemma:
\begin{lemma} \label{lem-bounds} %
  Let $\kap \geq 0$ and
  $X(\cdot;\kap)=(\Phi,W,Q,Z)$ be the corresponding solution of \eqref{DS}.
  Then:
  \begin{enumerate}
  \item If $W(\xi^{*}) <0$ for some $\xi^{*} \in \R$, then $X(\xi^{*};\kap)\in\Sigma_{-}$.
  \item If $W(\xi_{\ast})>2$ for some $\xi^{*} \in \R$, then $X(\xi^{*};\kap) \in\Sigma_{+}$.
  \end{enumerate}
\end{lemma}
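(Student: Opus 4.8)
The plan is to prove each implication by showing that, at the point $\xi^*$ where the hypothesis on $W$ holds, the remaining three sign conditions defining $\Sigma_\pm$ are automatically satisfied; then Lemma~\ref{lem-invariant}(i) guarantees that the trajectory stays in that region for all larger $\xi$. The key is that the energy formulas of Remark~\ref{E:DS} force a definite sign on the product $QZ$, and once the sign of $QZ$ is fixed the individual signs of $Q$ and $Z$ can be pinned down from the differential equations. Since $\Phi>0$ always holds (first equation of \eqref{DS}), what remains is to control the signs of $Q$ and $Z$.

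For part (i): assume $W(\xi^*)<0$. First I would use the energy identity \eqref{E1} (or \eqref{E2}). Because $\kap\geq 0$, the initial value of the $E_1$-type quantity is $E_1(0)=0$ (Lemma~\ref{lem-E1}) and $E_1$ is strictly increasing, so $e^{\frac{8-m}{m}\xi}\Phi^2\big(-\frac\alp2 W(W-2)+QZ\big)>0$ for all $\xi$; equivalently $QZ > \frac\alp2 W(W-2)$. At $\xi^*$, since $W(\xi^*)<0$ we have $W(W-2)>0$, hence $QZ>0$: so $Q$ and $Z$ have the same sign there. To decide which sign: if $W<0$ then $W(\xi^*)-1<0$, which is the first condition for $\Sigma_-$. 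Now I would argue by contradiction that the common sign of $Q,Z$ must be negative. Suppose $Q(\xi^*)>0$ and $Z(\xi^*)>0$. Looking backward, consider the first time $\xi^*$ is reached with $W<0$; just before, $W$ must have crossed $0$ from above, and one tracks the signs of $Q,Z$ through the $W\in(0,1)$ regime. Actually the cleaner route: note that if at any point $W-1<0$, $Q>0$, $Z>0$ held, then one checks (via the same invariance-type argument run forward as in Lemma~\ref{lem-invariant}) whether this partial configuration is sustained or whether it leads to $W>1$, contradicting $W(\xi^*)<0$ being reached later. The honest version is: $W(\xi^*)<0$ together with $QZ>0$ at $\xi^*$ leaves only the two cases $(Q,Z)$ both positive or both negative; I would rule out the positive case by examining the $W$-equation $\frac{dW}{d\xi}=\frac{Q}{\Phi^{m/3}}+W(1-W)$ at the last instant before $\xi^*$ where $W=0$: there $W_\xi = Q/\Phi^{m/3}$, and for $W$ to become negative we need $Q<0$ at that instant; continuity and the sign of $Z$ (from $QZ>0$, needing $E_1>0$, which holds also at that earlier point since $W(W-2)\ge 0$ there too whenever $W\le 0$) then give $Z<0$ there, and the invariance argument of Lemma~\ref{lem-invariant}(i) applied from that point yields $X\in\Sigma_-$ from then on, in particular at $\xi^*$. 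Part (ii) is entirely parallel: $W(\xi^*)>2$ gives $W(W-2)>0$ again, hence $QZ>0$ from \eqref{E1}, and $W-1>0$; the last time $W=2$ (or the relevant threshold) is analyzed with the $W$-equation to conclude $Q>0$, whence $Z>0$, whence $X\in\Sigma_+$ by invariance.

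I expect the main obstacle to be making the ``last crossing'' / contradiction argument rigorous — i.e. cleanly extracting the sign of $Q$ (and then of $Z$) at the crossing point from the scalar ODE for $W$, and making sure the energy inequality $QZ>0$ is available not just at $\xi^*$ but at the crossing point as well (which requires $W(W-2)\ge 0$ there, true since at a crossing $W$ equals $0$ or $2$). One must also handle the degenerate possibility that $W\equiv 0$ or $W$ oscillates; here the strict positivity $E_1(y)>0$ for $y>0$ from Lemma~\ref{lem-E1} and the strict monotonicity \eqref{D1-y} should preclude pathological behavior, since $f_y\equiv 0$ on an interval is incompatible with $E_1$ strictly increasing. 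Once the sign of $QZ$ and of one of $Q,Z$ is fixed at a suitable point, invoking Lemma~\ref{lem-invariant}(i) to propagate the full $\Sigma_\pm$ membership forward to $\xi^*$ closes the argument.
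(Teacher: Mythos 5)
Your ``honest version'' is essentially the paper's own proof: at a crossing time $\bar\xi$ where $W=0$ (resp.\ $W=2$) the term $W(W-2)$ vanishes, so the positivity of $E_1$ in the form of Remark~\ref{E:DS} gives $(QZ)(\bar\xi)>0$; the sign of $W_\xi(\bar\xi)$, read off from the second equation of \eqref{DS}, pins down the sign of $Q(\bar\xi)$ and hence of $Z(\bar\xi)$, and Lemma~\ref{lem-invariant} then propagates membership in $\Sigma_\mp$ forward to $\xi^*$ (the paper takes the first crossing, you take the last one before $\xi^*$; both work). The one point you flag but do not close is the existence of such a crossing: for $\kappa>0$ the initial conditions give $W\in(0,2)$ for $\xi$ near $-\infty$, so a crossing must occur before $\xi^*$, whereas for $\kappa=0$ no crossing need exist in case (i) — there one checks directly from \eqref{eq-int} that $f_y,f_{yy},f_{yyy}<0$ for small $y$, so the trajectory lies in $\Sigma_-$ at once and the conclusion follows from invariance; the paper likewise disposes of $\kappa=0$ separately before running the crossing argument, so you should add this preliminary step to make the proof complete.
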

\begin{proof}
  If $\kap = 0$, then the solution satisfies $W(\xi)=Q(\xi)=Z(\xi)=0$ for all $\xi\in\mathbb{R}$
 and there is nothing to prove. Hence, in the following we assume that $\kappa >0$. 
By the initial conditions, we have $W(\xi) \to 0$ as $\xi \to -\infty$ and since $\kappa > 0$, 
we also have $W(\xi) \in (0,2)$ for $\xi$ sufficiently small.

\medskip

{\it (i):} Suppose that $W(\xi_{\ast }) <0$ for some $\xi_\ast \in \R$ and let
\begin{align}
      \bar{\xi} = \min \left\{ \xi \in\mathbb{R}: \ W(\xi) = 0 \right\}.
\end{align}
Since $W \in (0,2)$ for $\xi$ sufficiently small and since $W$ is continuous, 
the minimum exists and we have $-\infty < \bar{\xi}<\xi_*<\infty$. By the definition of $\bar{\xi}$, 
we have $W_\xi(\bar{\xi}) \leq 0$. On the other hand, by the second equation in \eqref{DS}, 
we have $W_\xi(\xi)=Q(\bar{\xi})/\Phi^{\frac{m}{3}}(\bar{\xi})$ and hence $Q(\bar{\xi})\leq 0$. 
By Lemma~\ref{lem-E1}, we have $E_1(y) > 0$ for all $y > 0$. In view of Remark \ref{E:DS}, this implies
\begin{align}\label{main:ineq2}
 -\frac{\alp}{2} W(\bar \xi)( W(\bar \xi)-2) + (QZ)(\bar \xi)=  (QZ)(\bar \xi)> 0 .
\end{align}
Since $Q(\bar \xi) \leq 0$, this yields $Q(\bar \xi) < 0$ and $Z(\bar{\xi})< 0$. 
This finally implies that $X(\bar{\xi};\kap)\in \Sigma_-$ and Lemma~\ref{lem-invariant} 
implies the assertion.

  \medskip

{\it (ii):} Suppose now that $W(\xi_*) >2$ for some $\xi_* \in \R$. We define
  \begin{align}
    \bar{\xi}=\min \left\{ \xi\in \mathbb{R}: \ W(\xi) = 2 \right\}.
  \end{align}
  Since $W \in (0,2)$ for $\xi$ sufficiently small and since $W$ is continuous, the minimum
 exists and we have $-\infty < \bar{\xi}<\xi_*<\infty$. By the second equation in \eqref{DS} 
we obtain $Q(\bar{\xi})\geq 0$. Arguing as before, we also deduce that $(QZ)(\bar{\xi}) >0$, 
thus we get $Q(\bar{\xi}) >0$ and $Z(\bar{\xi}) >0$. This implies $X(\bar{\xi};\kappa) \in\Sigma_{+}$ 
and {\it (ii)} follows.
\end{proof}
Solutions exhibiting touch-down in finite time enter the invariant region $\Sigma_-$:
\begin{lemma}\label{lem-globalsol}
Let $\kappa\in \R$ and let $X(\cdot;\kappa) = (\Phi,W,Q,Z)$ be the corresponding solution 
for the dynamical system \eqref{DS}. If the maximal time of existence $\xi_M$ is finite, 
then, there exists $\bar \xi < \xi_M$ with $X(\bar \xi;\kappa) \in \Sigma_-$.
\end{lemma}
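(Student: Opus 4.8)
The plan is to argue by contraposition: assuming that $X(\cdot;\kappa)$ never enters $\Sigma_-$, I will show that the maximal existence time is $\xi_M=+\infty$. First I would dispose of the degenerate cases $\kappa\leq 0$. If $\kappa<0$, then $X(\cdot;\kappa)$ enters $\Sigma_-$ by Lemma~\ref{lem-kapextreme}(i); if $\kappa=0$, then the initial conditions \eqref{init:ss} give $f_{yyyy}(0)=-\alpha<0$, so $f_y,f_{yy},f_{yyy}<0$ for small $y>0$, whence $W,Q,Z<0$ there and $X(\cdot;\kappa)$ again enters $\Sigma_-$. In both cases the conclusion holds, so from now on assume $\kappa>0$. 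Then, reading Lemma~\ref{lem-bounds}(i) contrapositively, we have $W(\xi)\geq 0$ on the whole interval of existence.

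Suppose next that, in addition, $X(\cdot;\kappa)$ never enters $\Sigma_+$. Then Lemma~\ref{lem-bounds}(ii) gives $W(\xi)\leq 2$ as well, so $W\in[0,2]$ throughout. Assume for contradiction that $\xi_M<\infty$ and fix any $\xi_0\in\R$. Since $(\log\Phi)_\xi=W-\frac 4m$ is bounded, $\Phi$ stays between two positive constants on $[\xi_0,\xi_M)$, and hence so does $\Phi^{-m/3}$. The last two equations of \eqref{DS} then form a linear system for $(Q,Z)$ with bounded coefficients and bounded inhomogeneity on the finite interval $[\xi_0,\xi_M)$, so a Gronwall estimate keeps $(Q,Z)$ bounded there. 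Consequently $X(\xi)$ remains in a compact subset of $\{\Phi>0\}$ as $\xi\to\xi_M^-$, contradicting maximality; thus $\xi_M=+\infty$ in this case.

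The remaining case, where $X(\xi_0;\kappa)\in\Sigma_+$ for some $\xi_0$, is the main point. By Lemma~\ref{lem-invariant}(i) the orbit then stays in $\Sigma_+$ for $\xi\geq\xi_0$, so in the original variables $f,f_y,f_{yy},f_{yyy}>0$ on $[y_0,y_M)$, where $y_0=e^{\xi_0}$ and $y_M=e^{\xi_M}$; in particular $f$ is increasing, hence $f\geq f(y_0)=:c_0>0$. Assume for contradiction that $y_M<\infty$. Rewriting \eqref{ode} as $f_{yyyy}=\frac{\alpha(y f_y-f)}{f^m}-\frac{m f_y f_{yyy}}{f}$ and using that $y f_y-f>0$ in $\Sigma_+$ (since $W>1$), together with $f\geq c_0$ and $y\leq y_M$, one obtains the crude bound $f_{yyyy}\leq\frac{\alpha y_M}{c_0^m}f_y$ on $[y_0,y_M)$. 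Integrating this inequality four times starting from $y_0$, using the positivity of $f_{yy}$ and $f_{yyy}$ and bounding the outer integrations by the length of the interval, one arrives at an estimate of the form $f(y)\leq C_1+C_2\int_{y_0}^y f(s)\,ds$ on $[y_0,y_M)$, with constants depending only on $\alpha,m,y_M,c_0$ and the data at $y_0$. Gronwall's inequality then bounds $f$, and back-substitution successively bounds $f_{yyy},f_{yy},f_y$ and, via the equation, $f_{yyyy}$, all on the finite interval $[y_0,y_M)$. Hence the solution extends past $y_M$ with $f(y_M)\geq c_0>0$, contradicting the maximality of $\xi_M$; so $\xi_M=+\infty$ here as well, completing the proof.

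The main obstacle is precisely this last case: inside $\Sigma_+$ the component $W$ need not be bounded, so the boundedness arguments in the $(\Phi,W,Q,Z)$ variables break down and one is forced back to the fourth-order equation \eqref{ode}, exploiting the favorable sign of the nonlinearity $m f^{m-1} f_y f_{yyy}$ and the sign of $y f_y-f$ in $\Sigma_+$ to rule out finite-time blow-up.
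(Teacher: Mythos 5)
Your proof is correct, and its skeleton matches the paper's: dispose of $\kappa<0$ (and $\kappa=0$) via Lemma~\ref{lem-kapextreme}, use Lemma~\ref{lem-bounds}(i) to conclude $W\geq 0$ whenever the orbit avoids $\Sigma_-$, and then rule out finite-time breakdown. Where you differ is in how that last step is justified. The paper only notes that $W\geq 0$ gives $\Phi(\xi)\geq c_0e^{-4\xi/m}$, so the orbit cannot reach the singular set $\{\Phi=0\}$ in finite time, and then invokes ``standard ODE theory'' for global existence; it does not explicitly address possible finite-time blow-up of $(W,Q,Z)$, which is not automatic since \eqref{DS} contains quadratic terms. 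Your extra case split supplies precisely this: if the orbit also avoids $\Sigma_+$, Lemma~\ref{lem-bounds}(ii) pins $W\in[0,2]$, so $\Phi$ is trapped between positive constants on any bounded interval and the $(Q,Z)$-subsystem is linear with bounded coefficients, closing with Gronwall; if the orbit enters $\Sigma_+$, you go back to \eqref{ode} and exploit the sign structure (all of $f,f_y,f_{yy},f_{yyy}>0$, the term $-mf_yf_{yyy}/f\leq 0$ acting as damping) together with $f\geq c_0$ to get $f(y)\leq C_1+C_2\int_{y_0}^y f$, then Gronwall and back-substitution bound all derivatives, so the solution extends. Thus your argument is longer but self-contained and actually covers the blow-up scenario that the paper's one-line appeal leaves implicit, while the paper buys brevity. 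Two small remarks: your computation $f_{yyyy}(0)=-\alpha$ is the right way to see that the $\kappa=0$ orbit enters $\Sigma_-$; and in the $\Sigma_+$ case you only need $f>0$ (not the sign of $yf_y-f$) to obtain the bound $f_{yyyy}\leq \alpha y f_y/f^m$, so that observation is harmless but not needed.
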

\begin{proof}
If $\kap < 0$ or $\kap \geq 0$ and $W(\xi) < 0$ for some $\xi \in (0,\xi_M)$, 
then by Lemma~\ref{lem-kapextreme} and Lemma~\ref{lem-bounds}, the solution enters 
$\Sigma_-$ and there is nothing to prove. We hence assume that $\kappa \geq 0$ and 
$W(\xi) \geq 0$ for all $\xi\in [ 0,\xi_{M})$. Then the first equation in \eqref{DS} 
yields $\Phi(\xi) \geq c_{0}e^{-\frac{4\xi}{m}}$ as long as the solution is defined. 
By standard ODE theory, this implies that the solutions of \eqref{DS} are defined globally,
 which gives a contradiction.
\end{proof}

\subsection{Behavior for large $\xi$}

In this section, we consider the asymptotic behavior of solutions which enter 
neither of the two invariant regions $\Sigma_\pm$ (i.e. $\kap \in I_0$). We 
first show that these solutions are globally defined and satisfy certain bounds 
in terms of $\Phi$, $W$:
\begin{lemma} \label{lem-lim1} %
Let $\kappa\in I_{0}$ and let $X(\cdot;\kappa) = (\Phi,W,Q,Z)$ be the corresponding 
solution of the dynamical system \eqref{DS}. Then $X(\cdot, \kap)$ is globally defined with
  \begin{align}\label{limit1}
    \lim_{\xi \to \infty}\Phi(\xi) = K %
    &&\text{where} \quad
       \begin{TC}
         K = 0 \qquad &\text{if $m \in (0,4)$,} \\
         K \geq 0 &\text{if $m=4$}. 
       \end{TC}
  \end{align}
  Furthermore, $W$ satisfies
\begin{align} \label{limit2} %
    0 \leq W(\xi) \leq 2 && \text{for all $\xi \in \R$.}
\end{align}
\end{lemma}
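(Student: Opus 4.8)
The goal is to show that if $\kappa \in I_0$ (so the solution enters neither $\Sigma_+$ nor $\Sigma_-$), then the solution is global and satisfies $0 \le W \le 2$ everywhere, and that $\Phi$ converges to a nonnegative limit $K$ which must vanish when $m \in (0,4)$. The starting point is to extract the bound \eqref{limit2} from the contrapositive of Lemma~\ref{lem-bounds}: if $W(\xi^*) < 0$ at some $\xi^*$ then $X(\xi^*;\kappa) \in \Sigma_-$, and if $W(\xi^*) > 2$ then $X(\xi^*;\kappa) \in \Sigma_+$; since $\kappa \in I_0$ neither can happen, so $0 \le W(\xi) \le 2$ for all $\xi$ in the maximal interval of existence. (The case $\kappa = 0$ is trivial, and for $\kappa > 0$ the initial conditions place $W$ in $(0,2)$ for $\xi$ near $-\infty$, so continuity gives the two-sided bound.) Global existence then follows from Lemma~\ref{lem-globalsol}: if $\xi_M < \infty$ the solution would have to enter $\Sigma_-$, contradicting $\kappa \in I_0$; hence $\xi_M = \infty$ (and the same argument, or the explicit lower bound $\Phi \ge c_0 e^{-4\xi/m}$ coming from $W \ge 0$ in the first equation of \eqref{DS}, rules out blow-up).

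\medskip

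For the limiting behavior of $\Phi$, I would use the first equation $\frac{d\Phi}{d\xi} = \Phi(W - \frac 4m)$. Since $0 \le W \le 2$ by \eqref{limit2}, we have $\Phi_\xi \le (2 - \frac 4m)\Phi$. For $m < 4$ the coefficient $2 - \frac 4m$ is negative when $m < 2$, but not when $m \in (2,4)$, so a one-line decay estimate is not immediate and one needs more structure. The natural tool is the energy identity \eqref{E1} from Remark~\ref{E:DS}: since $E_1(y) > 0$ for $y > 0$ (Lemma~\ref{lem-E1}), the quantity $e^{\frac{8-m}{m}\xi}\Phi^2\big(-\frac\alpha2 W(W-2) + QZ\big)$ is positive and increasing, and its derivative is $e^{\frac{8-m}{m}\xi}\Phi^2\big(\frac\alpha2 W^2 + Z^2\Phi^{-3/m}\big) \ge 0$. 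Because the integrand is nonnegative, integrating over $\xi$ gives that $\int^\infty e^{\frac{8-m}{m}\xi}\Phi^2\big(\frac\alpha2 W^2 + Z^2 \Phi^{-3/m}\big)\,d\xi$ is either finite (forcing decay of the integrand along a sequence, and with the monotone prefactor $e^{\frac{8-m}{m}\xi} \to \infty$ this forces $\Phi \to 0$ when $8 - m > 0$, i.e. always on our range) or the energy itself blows up. In the latter case $W(W-2) < 0$ is bounded, so the positive part of the energy must come from $QZ$, and one argues that $\Phi^2 QZ$ growing like $e^{-\frac{8-m}m\xi}$ combined with the sign constraints (we are outside $\Sigma_\pm$, so $Q$ and $Z$ cannot both have the same sign once $W \in (0,2)$ strictly — actually one must be careful here) leads either to entering an invariant region or to a contradiction with global boundedness of $W$.

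\medskip

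The cleaner route, which I would actually follow, is: first show $\Phi$ is bounded. Suppose not; then along a sequence $\xi_n \to \infty$, $\Phi(\xi_n) \to \infty$ with $\Phi_\xi(\xi_n) \ge 0$, hence $W(\xi_n) \ge \frac 4m$. Combined with the equation for $W$ and the smallness of $\Phi^{-m/3}$ this should be shown to be inconsistent with $W \le 2$ (in particular for $m \le 2$ one has $\frac 4m \ge 2$, giving an immediate contradiction; for $m \in (2,4)$ one must use that $Q/\Phi^{m/3} \to 0$ so $W_\xi \approx W(1-W) < 0$ near $W = \frac 4m \in (1,2)$, pushing $W$ down — a more careful argument with the $Q$ equation is needed). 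Once $\Phi$ is bounded, say $\Phi \le M$, revisit $\Phi_\xi = \Phi(W - \frac 4m)$ and the $W$-equation: using \eqref{E1} again, now with the prefactor $e^{\frac{8-m}m\xi} \ge e^{\frac{8-m}m \xi}$ blowing up and $\Phi$ bounded, the positivity and monotonicity of the energy force $\int^\infty \Phi^2 W^2\, e^{\frac{8-m}m\xi}\,d\xi < \infty$ only if $\Phi^2 W^2 \to 0$, hence (excluding $W \to 0$ with $\Phi$ bounded away from $0$, handled separately) $\Phi \to 0$ when $m < 4$; when $m = 4$ the prefactor $e^{\frac{8-m}m\xi} = e^\xi$ still blows up, so in fact one needs a separate, softer argument for $m = 4$ — monotonicity of some Lyapunov-type functional showing $\Phi_\xi/\Phi = W - 1 \to$ a limit and that limit must be $0$, whence $\Phi \to K \ge 0$. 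The main obstacle, as this discussion shows, is handling the range $m \in (2,4]$ where the naive exponential-decay estimate on $\Phi$ fails; there the argument has to go through the energy identity \eqref{E1} and a careful analysis of the sign and size of $Q$, $Z$ away from the invariant regions, and I would expect this to be the technically delicate heart of the lemma.
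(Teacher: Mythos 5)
The easy parts of your plan match the paper: the bound $0 \le W \le 2$ follows from Lemma~\ref{lem-bounds} exactly as you say, and global existence follows from Lemma~\ref{lem-globalsol} since entering $\Sigma_-$ is excluded for $\kappa \in I_0$. The problem is the core of the lemma, the existence of $\lim_{\xi\to\infty}\Phi(\xi)$ (with $K=0$ for $m\in(0,4)$ and $K\ge 0$ for $m=4$): neither of your two sketched routes closes, and you essentially concede this yourself. Your arguments rest on the first energy $E_1$, but $E_1$ is positive and increasing, so its monotonicity gives you no finiteness of the dissipation integral $\int e^{\frac{8-m}{m}\xi}\Phi^2\bigl(\tfrac{\alpha}{2}W^2+Z^2\Phi^{-3/m}\bigr)d\xi$; and even if that integral were finite it would only force the integrand to vanish along a sequence, which does not yield a limit for $\Phi$ (nor rule out the alternative $W\to 0$ that you flag but do not handle). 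Moreover, positivity of $E_1$ reads $-\tfrac{\alpha}{2}W(W-2)+QZ>0$, and since $-W(W-2)\ge 0$ on $[0,2]$ this places no sign constraint on $QZ$; your parenthetical claim that outside $\Sigma_\pm$ the signs of $Q$ and $Z$ must differ is false (they may both be positive with $W<1$, or both negative with $W>1$). The ``cleaner route'' (boundedness of $\Phi$ by a sequence argument, then decay) and the $m=4$ case (``some Lyapunov-type functional'') are stated as hopes, not proofs.

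The missing idea is the second energy $E_2$ of Lemma~\ref{lem-E2}, which is precisely engineered to fix both defects of $E_1$. First, $E_2$ is increasing and tends to $-\infty$ as $y\to 0^+$, so one can split into two cases: if $E_2\le 0$ for all $y$, then integrating \eqref{D2y} gives $\int_1^\infty\bigl(\tfrac{\alpha}{2}(f_s-\tfrac fs)^2+f^m f_{sss}^2\bigr)ds\le -E_2(1)<\infty$, and by Cauchy--Schwarz together with $(f/y)_y=\tfrac1y(f_y-\tfrac fy)$ the limit $K=\lim_{y\to\infty} f(y)/y\ge 0$ exists, whence $\Phi=(f/y)\,y^{1-4/m}\to 0$ for $m<4$ and $\Phi\to K$ for $m=4$. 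Second, if instead $E_2>0$ eventually, then because $E_2$ in the variables \eqref{def-PWZQ} equals $e^{\frac{8-m}{m}\xi}\Phi^2\bigl(-\tfrac{\alpha}{2}(W-1)^2+QZ\bigr)$, positivity forces $QZ>0$; combined with exclusion from $\Sigma_\pm$ this yields the dichotomy $W_\xi<0$ where $W>1$ and $W_\xi>0$ where $W<1$, hence $W$ is monotone, converges, and an integrability argument pins the limit at $1$; the first equation $\Phi_\xi=\Phi(W-\tfrac4m)$ then gives $\Phi\to 0$ for $m<4$ and, after a Cauchy estimate on $\ln\Phi$, $\Phi\to K>0$ for $m=4$. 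Without this completed-square structure of $E_2$ (note $E_2=E_1-\tfrac{\alpha}{2y}f^2$ up to the computation \eqref{sq-term}) your approach cannot extract either the finite dissipation in the first case or the sign information $QZ>0$ in the second, so the proposal has a genuine gap at the technically delicate heart of the lemma.
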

\begin{proof} %
By Lemma~\ref{lem-globalsol}, the solution $X(\cdot;\kap)$ is defined for all $\xi \in \R$. 
By Lemma~\ref{lem-bounds}, \eqref{limit2} holds. It hence remains to show \eqref{limit1}. 
By Lemma~\ref{lem-E2}, $E_2$ is increasing. Hence, one of the following two cases holds: 
Either we have $E_{2}(y) \leq 0$ for all $y > 0$ or there exists $y^*$ such that $E_{2}(y) >0$ 
for all $y > y^*$. We consider these two cases separately:

\medskip

{\it Case $1$:} In this case, we assume that $E_{2}(y) \leq 0$ for all $y >0$. 
Integrating \eqref{D2y} in the interval $\left( 1,\infty\right)$ we then obtain
  \begin{align}\label{diss-fin} %
    \int_{1}^{\infty} \Big( \frac{\alp}{2}\Big( f_{s}-\frac{f}{s} \Big) ^{2} + f^{m}f_{sss}^2  \Big)\ ds \
    =-E_2(1)<\infty,
  \end{align}
since $E_2(y)$ cannot blow up at a finite $y$ (it is increasing and non-positive). 
In particular, we get
  \begin{align} \label{also-fin} %
    \int_{1}^{\infty} \Big|f_s-\frac fs \Big| \, \frac{ds}s %
    \leq\Big( \int_{1}^{\infty}\Big|f_s-\frac fs\Big|^{2}\,ds\Big)^{\frac{1}{2}}
\Big( \int_{1}^{\infty}\frac{ds}{s^{2}}\Big)^{\frac{1}{2}} < \infty.
  \end{align}
Using the observation that $(\frac fy)_y = \frac 1 y(f_y- \frac fy)$, we compute
  \begin{align}\label{calc-wlim} %
    \frac{f(y)}y = f(1) + \int_{1}^{y} \Big(f_s-\frac fs\Big) \frac{ds}{s}.
  \end{align}
In view of \eqref{also-fin}, the right hand side of \eqref{calc-wlim} is absolutely 
convergent as $y \to \infty$ and the limit is finite. Therefore, also the limit 
$K := \lim_{y\to\infty}\frac{f( y) }{y} \geq 0$ exists and is finite. 
Since $\Phi=f/y^{{\frac 4m}}$, in particular we get $\Phi(\xi) \to 0$ as $\xi \to \infty$ 
if $m \in (0,4)$.

\medskip

{\it Case $2$: } In this case, we assume that there exists $y^* > 0$ such that $E_{2}(y) >0$
 for all $y>y^*$. In view of \eqref{E2}, we then have $(QZ)(\xi)>0$ for all $\xi > \xi^*$
 where $y^* = e^{\xi^*}$. In the following, we assume $\xi > \xi^*$. By Lemma~\ref{lem-bounds},
 we also have $0 \leq W(\xi) \leq 2$. We claim that
  \begin{align} \label{W-dicho} %
    \frac{dW}{d\xi} < 0 \quad \text{if} \ W(\xi) > 1 \ && \text{and} && %
    \ \frac{dW}{d\xi} > 0 \quad \text{if} \ W(\xi) < 1\,.
  \end{align}
Indeed, if $W(\xi) > 1$ and $W_\xi(\xi) \geq 0$, then by the second equation in 
\eqref{DS} we get $Q(\xi) >0$. Since $(QZ)(\xi) > 0$, this implies $Z(\xi) > 0$ and 
hence $X(\xi;\kap) \in \Sigma_+$ which contradicts the assumption that $\kap \in I_0$.
 Similarly, if $W(\xi) < 1$ and $W_\xi(\xi) \leq 0$, then by the second equation in \eqref{DS}
 we have $Q(\xi) < 0$. Since $(QZ)(\xi) > 0$, this implies $Z(\xi) < 0$ and hence 
$X(\xi;\kap) \in \Sigma_-$ which, again, contradicts our assumptions. 
Therefore \eqref{W-dicho} holds. It follows that either $0 \leq W(\xi) \leq 1$ 
and $W$ is non decreasing for all $\xi\geq \xi^*$, or that $1 \leq W(\xi) \leq 2$ 
and $W$ is non increasing for all $\xi\geq \xi^*$. Therefore, there is $L\in [0, 2]$ 
such that $\lim_{\xi\to \infty}W(\xi) = L$.

We next show that $L=1$. Arguing by contradiction, we assume that $L\neq1$. 
If $L \in (1,2]$, then as before and by \eqref{W-dicho}, we have $W_\xi(\xi) < 0$ 
and $Q(\xi) < 0$ for all $\xi > \xi^*$. Analogously, if $L \in [0,1)$, then 
$W_\xi(\xi) > 0$ and $Q(\xi) > 0$ for all $\xi > \xi^*$. In both cases, by the
 second equation in \eqref{DS}, we get $|W_\xi| \geq W |W-1|$ and
  \begin{align} \label{dasauch} %
    \int_{\xi^*}^\infty W(\xi) |W(\xi)-1| d\xi \leq 
\int_{\xi^*}^\infty \left| \frac{dW(\xi)}{d \xi}\right| d\xi = 
|W(\xi^*) - L| < \infty,
  \end{align}
which implies $L = 1$ (the case $L=0$ contradicts the dynamics of \eqref{DS}).

Since $W(\xi)\to 1$ as $\xi\to\infty$ and by the first equation in \eqref{DS}, 
we immediately get $\Phi(\xi) \to 0$ as $\xi \to \infty$ if $m \in (0,4)$. 
If $m=4$, by integrating the first equation in \eqref{DS}, we get for $\xi_2 \geq \xi_1 \geq \xi^*$,
\begin{align*}
  \hspace{6ex} & \hspace{-6ex} %
 |\ln \Phi(\xi_1) - \ln \Phi(\xi_2)| = \int_{\xi_1}^{\xi_2} (W(\xi) - 1)d\xi \leq 
\frac 2{W(\xi_1)} \int_{\xi_1}^\infty W(\xi) |W(\xi)-1| d\xi %
 \stackrel{\eqref{dasauch}}{\to}0
 \end{align*}
as $\xi_1 \to \infty$. We have used that $W(\xi) \geq 2W(\xi_1)$ if $W(\xi_1) \geq 1$ 
and that $W(\xi)$ increases monotonically if $W(\xi_1) < 1$. This implies 
$\ln \Phi(\xi) \to \ln K < \infty$ for some $K > 0$ and hence $\Phi(\xi) \to K > 0$ as $\xi\to\infty$.
\end{proof}
%%%%%%%%%%%%%%%%%%%%%%%%%%%%%%%%%%%%%%%%%%%%%
The asymptotic behavior for solutions which enter neither of the two 
invariant regions $\Sigma_\pm$ is described in the next proposition:
\begin{proposition} \label{prp-lim2} %
  Suppose that $\kappa\in I_{0}$ and let $X(\cdot;\kappa) = (\Phi,W,Q,Z)$ be the
  corresponding solution for the dynamical system \eqref{DS}. 
\begin{enumerate}
\item If $m \in (0,4)$, then we have
  \begin{align}\label{limit}
    \lim_{\xi \to \infty}\Phi(\xi) = 0,\ \lim_{\xi \to \infty} W(\xi) = 1,\ \lim_{\xi \to \infty} Q(\xi) = 0,  \ 
    \lim_{\xi \to \infty} Z(\xi) =0.
\end{align}
More precisely, for some $a > 0$ and for $K_0=\frac 38(4-m)^{\frac{4}{3}} a^{\frac{m}{3}}$
 we have
\begin{align}\label{asymptotics} %
    \lim_{\xi\to\infty} e^{\frac{4-m}{m}\xi}\Phi(\xi) = a > 0\,,\ \lim_{\xi\to\infty}e^{K_0 e^{\frac{4-m}{3}\xi} }
    \|(W-1, Q, Z)\| <\infty.
\end{align}

\item If $m=4$, then we have
\begin{align}\label{limit4}
    \lim_{\xi \to \infty}\Phi(\xi) = a >0, \, \lim_{\xi \to \infty} W(\xi) = 1, \, \lim_{\xi \to \infty} Q(\xi) = 0, \, 
    \lim_{\xi \to \infty} Z(\xi) =0.
\end{align}
More precisely, there exist constants $C_1, C_2\in \mathbb{C}$, 
such that for all $\varepsilon>0$ there exists $\xi_0$ large enough satisfying
\begin{equation}\label{asymptoticsII} %
  |W(\xi) - 1 - C_1 e^{z_0\xi} -C_2 e^{z_0^*\xi}|<\varepsilon \quad \mbox{for all} \ \xi>\xi_0,
\end{equation} 
and
\begin{equation}\label{asymptotics4}
  \lim_{\xi\to\infty}e^{|\lambda_0| \xi } \|(W-1, Q, Z)\| <\infty,
 \end{equation}
where $\lambda_0:=\Re(z_0)$. Here, $z_0$ is the root of $(1-z^2)(2+z) + \frac{b}{a^4}$
 with the largest negative real part and, for definiteness, with $\Im(z_0)\geq 0$. Moreover, $|\lambda_0|>1$ for all $a>0$.
\end{enumerate}
\end{proposition}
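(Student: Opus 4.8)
The strategy is to linearize the dynamical system \eqref{DS} around the equilibrium point corresponding to the conjectured limit and read off the exponential decay rates from the eigenvalues of the linearization. By Lemma~\ref{lem-lim1} we already know that $W$ stays in $[0,2]$ and that $\Phi(\xi) \to K$, with $K = 0$ if $m\in(0,4)$ and $K \geq 0$ if $m=4$; it remains to pin down the limits of $W, Q, Z$ and the sharp rates. The first step is to show that $Q(\xi), Z(\xi) \to 0$ and $W(\xi) \to 1$. For this I would revisit the energy formula \eqref{E2} (Remark~\ref{E:DS}): integrating \eqref{D2y} over $(1,\infty)$, either $E_2 \leq 0$ always (in which case, as in the proof of Lemma~\ref{lem-lim1}, the dissipation integral is finite and forces convergence), or $E_2 > 0$ eventually and the monotonicity dichotomy \eqref{W-dicho} already shows $W \to 1$; feeding $W\to 1$ back into the second and fourth equations of \eqref{DS}, together with the finiteness of $\int W|W-1|\,d\xi$ from \eqref{dasauch}, gives $Q/\Phi^{m/3}$ and $Z/\Phi^{m/3}$ integrable, and a short argument (using $\kappa\in I_0$, so $X$ never enters $\Sigma_\pm$, hence $QZ$ cannot have a fixed sign with $|Q|,|Z|$ bounded below) yields $Q,Z\to 0$.

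**Sharp rates for $m\in(0,4)$.** Here $\Phi\to 0$ like $a\,e^{-\frac{4-m}{m}\xi}$: indeed from the first equation $\frac{d}{d\xi}\ln(e^{\frac{4-m}{m}\xi}\Phi) = W - 1 \to 0$, and the integrability $\int_{\xi^*}^\infty|W-1|\,d\xi<\infty$ (from \eqref{dasauch}, using $W\to 1$ so $W|W-1|\sim|W-1|$) shows $e^{\frac{4-m}{m}\xi}\Phi(\xi)$ converges to some $a>0$ (positivity because $\Phi>0$ is preserved and the log-integral converges). Writing $\eta := \Phi^{-m/3} \sim a^{-m/3} e^{\frac{4-m}{3}\xi}$, which grows exponentially, the equations for $(W-1,Q,Z)$ form a linear system with a large coefficient $\eta$ multiplying the off-diagonal coupling: schematically $\frac{d}{d\xi}(W-1) = \eta Q + \text{h.o.t.}$, $\frac{d}{d\xi}Q = \eta Z + \ldots$, $\frac{d}{d\xi}Z = \alpha\eta (W-1) + \ldots$ The dominant balance is governed by the cube roots of $\alpha$, and a WKB/Grönwall estimate (change of variable $s = \int^\xi \eta(\xi')\,d\xi' \sim \frac{3}{4-m}a^{-m/3}e^{\frac{4-m}{3}\xi}$, so that $\frac{d}{ds}$ of the vector is an $O(1)$ linear system with a stable direction) produces the decay $\|(W-1,Q,Z)\| \lesssim e^{-c\,e^{\frac{4-m}{3}\xi}}$ with the constant $c = K_0 = \frac38(4-m)^{4/3}a^{m/3}$ read off from the relevant eigenvalue (the real root $-1$ of $1-z^2$ scaled appropriately, times the integrating factor); here one must check that the solution selects the decaying mode — this is forced by $\kappa\in I_0$, since any growing component of $W-1$ would push $W$ above $2$ or below $0$, entering $\Sigma_\pm$ by Lemma~\ref{lem-bounds}.

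**Sharp rates for $m=4$ and the main obstacle.** When $m=4$ we have $\alpha = b$, $\Phi\to a>0$ (this is the content of the last paragraph of the proof of Lemma~\ref{lem-lim1}), so $\eta = \Phi^{-4/3}\to a^{-4/3}$ is \emph{bounded}, and the linearization of \eqref{DS} around $(\Phi,W,Q,Z)=(a,1,0,0)$ becomes a genuine constant-coefficient $4\times 4$ (effectively $3\times 3$ in $(W-1,Q,Z)$, since the $\Phi$-equation decouples to leading order) linear system. Its characteristic polynomial, after substituting the explicit coefficients from \eqref{DS} with $W=1$, $m=4$, and $\Phi^{4/3}=a^4$ (noting the integrating factors $1,1,1$ and $2,\tfrac43,-1$ on the diagonal become shifts), works out — up to the expected normalization — to $(1-z^2)(2+z) + \frac{b}{a^4}$, so the decay exponents are its roots $z_0, z_0^*$ and the third (most negative, or possibly a conjugate pair) root; the slowest-decaying relevant one is $z_0$ with $\Re(z_0) = \lambda_0 < -1$. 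The estimates \eqref{asymptoticsII}–\eqref{asymptotics4} then follow from the stable-manifold theorem (or a direct Duhamel/Grönwall bootstrap) applied at the hyperbolic rest point, again using $\kappa\in I_0$ to exclude the unstable modes. The computation $|\lambda_0|>1$ for all $a>0$ (equivalently all $b>0$, since $b$ ranges over $(0,\infty)$) requires examining the polynomial $p(z)=(1-z^2)(2+z)+\beta$ with $\beta=b/a^4>0$: one checks $p(-1)=\beta>0$ and $p(z)\to-\infty$ as $z\to-\infty$, so there is a real root in $(-\infty,-1)$, and one must verify that any complex roots also have real part $<-1$ — this is the delicate point, requiring a careful location of all three roots of the cubic as $\beta$ varies over $(0,\infty)$ (e.g. via Routh–Hurwitz applied to $p(w-1)$ or an explicit discriminant analysis). \textbf{The main obstacle} is precisely this sharp two-sided control: establishing that the solution lies on the stable manifold (no contribution from growing/slowly-decaying unstable directions), which hinges on translating "$\kappa\in I_0$, i.e. $X$ avoids $\Sigma_\pm$ forever" into a rigorous exclusion of the forbidden linear modes, and then extracting the precise leading coefficient and exponent rather than merely "some exponential rate".
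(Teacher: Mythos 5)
Your overall strategy is the same as the paper's: reduce to the linear system for $Y=(W-1,Q,Z)$, for $m\in(0,4)$ pass to the fast time variable $d\tau=d\xi/\Phi^{m/3}$ so that the system becomes a vanishing perturbation of $\dot Y=AY$ with eigenvalues the cube roots of $\alpha$ (Levinson/WKB type estimates, decaying modes selected because $\kappa\in I_0$ forbids entering $\Sigma_\pm$), and for $m=4$ linearize at $(a,1,0,0)$ with characteristic polynomial $(1-z^2)(2+z)+b/a^4$. However, two steps are genuinely missing or wrong. First, for $m=4$ you take $\Phi\to a>0$ as given, attributing it to Lemma~\ref{lem-lim1}; but that lemma only yields $K\geq 0$ (its Case~1, $E_2\leq 0$, gives no positive lower bound), and the strict positivity is part of what the proposition must prove. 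The paper does this by contradiction: if $a=0$, the $m<4$ machinery applies verbatim with $\alpha=b$ and yields the decay estimate on $\|Y\|$, hence $\int_0^\infty|1-W|\,d\xi<\infty$; integrating the first equation of \eqref{DS} then bounds $\Phi$ below by a positive constant, contradicting $a=0$. Without this (or some substitute) your $m=4$ linearization has no fixed point to linearize about.

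Second, your argument for $|\lambda_0|>1$ does not work as written: $P_a(z)=(1-z^2)(2+z)+\beta$ tends to $+\infty$, not $-\infty$, as $z\to-\infty$, so the intermediate value argument from $P_a(-1)=\beta>0$ produces no real root in $(-\infty,-1)$; indeed for large $\beta=b/a^4$ the two stable roots are a complex-conjugate pair and there is no real root below $-1$. You then explicitly leave the complex case open (``delicate \dots Routh--Hurwitz or discriminant analysis''), which is exactly the case that must be handled. The paper's resolution is a one-liner you could adopt: $P_a>0$ on $[-1,1]$ (since $P_0\geq 0$ there), so the unique real positive root satisfies $r>1$ and any real stable root is $<-1$; and since the three roots sum to $-2$, a complex-conjugate stable pair has real part $\lambda_0=\tfrac12(-2-r)<-1$. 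A smaller point in the same spirit: for $m\in(0,4)$ you ``read off'' $K_0$ from ``the real root $-1$ of $1-z^2$'', but for $m<4$ the relevant decay rate is $\mathrm{Re}\,\lambda_{2,3}=-\tfrac12\alpha^{1/3}$ of the cube roots of $\alpha$, multiplied by the time change $\tau\sim\frac{3}{4-m}a^{-m/3}e^{\frac{4-m}{3}\xi}$ (this is how the paper obtains the double-exponential rate); also your preliminary soft argument that $Q,Z\to0$ via integrability of $Q/\Phi^{m/3}$ and $(W-1)/\Phi^{m/3}$ is shaky because $\Phi^{m/3}\to0$, but that step is harmless since it is subsumed by the WKB estimate.
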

\begin{proof} %
  {\it (i):} Lemma~\ref{lem-lim1} implies that $\lim_{\xi\to\infty}\Phi(\xi)=0$ and 
$0 \leq W(\xi) \leq 2$ for all $\xi \in \R$. With the notation $Y := (W-1,Q,Z)$, 
we write the last three equations of \eqref{DS} as a linear system:
  \begin{align} \label{DS-Y-xi} %
    \frac{dY}{d\xi} = \Big[\frac A{\Phi^{\frac{m}{3}}} + B(W)\Big] Y,
  \end{align}
  where
  \begin{align} \label{def-AB} %
    A= \left(
      \begin{array} [c]{ccc}%
        0 & 1 & 0\\
        0 & 0 & 1\\
        \alp & 0 & 0
      \end{array}
    \right), \ %
    B(W) =\left(
      \begin{array} [c]{ccc}%
        \hspace{-0.5ex}
        -W & 0 & 0\\
        0 & \frac{2}{3}+\frac{m-3}{3}W & 0\\
        0 & 0 &  \frac{1}{3}-\frac{m+3}{3} W \hspace{-0.5ex} 
      \end{array}
    \right).
  \end{align}
We introduce $\tau(\xi) \in \R$ by $\tau(0) = 0$ and $d\tau=\frac{d\xi}{\Phi^{\frac{m}{3}}}$. 
Then $\Phi^{\frac{m}{3}}\frac{dY}{d\xi} = \frac{dY}{d\tau}$ and $\lim_{\xi\to \infty}\tau(\xi)= \infty$,
 since $\lim_{\xi \to \infty}\Phi(\xi) =0 $. In terms of $\tau$, \eqref{DS-Y-xi} reads
  \begin{align}\label{DS-Y-tau}
    \frac{dY}{d\tau} = [A+\Phi^{\frac{m}{3}} B] Y. %
  \end{align}
For $\tau_0 > 0$ and $\tau \geq \tau_{0}$, we define the operator $U(\tau,\tau_0)$ by
  \begin{align*}
     U(\tau_{0};\tau_{0})=I \quad \text{ and } \quad 
    \frac{d}{d\tau}U(\tau;\tau_{0}) &= [A+\Phi^{\frac{m}{3}} B(\tau)] U(\tau;\tau_{0}).
  \end{align*}
We note that the matrix $B$ is uniformly bounded in $\tau$, since 
$0 \leq W(\tau) \leq 2$. Here and in the following, by an abuse of
 notation, the functions $(\Phi,W,Q,Z)$ are written as functions both of $\xi$
 and of $\tau$, with the obvious meaning. By classical ODE theory,
 $U(\tau;\tau_0)$ is uniquely defined.

Since $\Phi(\tau) \to 0$ as $\tau \to \infty$, we also have 
$\|\Phi^{\frac{m}{3}}(\tau) B(\tau)\| \to 0$. By the continuous dependence of $U(\tau;\tau_{0})$ 
on the initial data at $\tau = \tau_0$, we get
\begin{align}\label{lin:sys:appr}
    \lim_{\tau_{0}\to\infty}\sup_{\| \tau-\tau_{0}\| \leq M}\| U\left( \tau;\tau_{0}\right) -e^{(\tau-\tau_{0}) A}\| =0.
  \end{align}
The distinct eigenvalues $\lam_i \in \C$ of $A$ are given by $\lam_i^3 = \alp$, i.e.
  \begin{align*}
    \lambda_1= \alp^{\frac 13}, && %
    \lambda_2= \big(-\tfrac 12 + \tfrac {\sqrt{3}}2 i \big) \alp^{\frac 13},&& %
    \lambda_3=  \big(-\tfrac 12 - \tfrac {\sqrt{3}}2 i \big) \alp^{\frac 13}.
  \end{align*}
Then, by \eqref{lin:sys:appr} and since $\Phi^{\frac m3} B$ is a uniformly bounded 
given matrix, any solution of \eqref{DS-Y-tau} can be expressed as a linear 
combination of three solutions $Y^k(\tau)$, with
\begin{align*}
 c e^{ \Re \lambda_k \tau - d \int_{\tau_0}^{\tau} \|\Phi(s) ^{\frac m3} B(s)\|ds} \leq \|Y^k_i(\tau;\tau_0)\|   \leq
 C e^{ \Re \lambda_k \tau + d \int_{\tau_0}^{\tau} \|\Phi(s)^{\frac m3}B(s)\|ds},
\end{align*}
here $k,i \in \{ 1,2,3 \}$, for all $\tau \geq \tau_0$ for a sufficiently large $\tau_0$,
and for positive constants $c,C,d > 0$ (see e.g. \cite{Bellman1969}). Using that $\Re (\lam_1) > 0$, 
we conclude that $Y$ is a linear combination of only $Y^2$ and $Y^3$. 
In particular,
\begin{align}\label{est:Y}
 c e^{ - \frac 12 \alp^{1/3}  \tau - d \int_{\tau_0}^{\tau} \Phi^{\frac m3}(s) ds} \leq \|Y(\tau;\tau_0)\|   \leq C
 e^{ - \frac 12 \alp^{1/3}  \tau + d \int_{\tau_0}^{\tau} \Phi^{\frac m3}(s) ds}.
\end{align}
for positive constants $c,C,d > 0$. Since $\lim_{\tau\to\infty}\Phi(\tau)=0$
 for $0 < m < 4$, we obtain $\|Y(\tau;\tau_0)\|\to 0$ as $\tau\to \infty$. 
In particular, assertion \eqref{limit} holds.

\medskip

We turn to the proof of \eqref{asymptotics}. As before, the asymptotic behavior 
of $\Phi$ in the variable $\xi$ is easily obtained from the first equation 
\eqref{DS}, since we know that $W\to 1$ as $\xi\to\infty$. This implies that 
$\lim_{\xi\to\infty} e^{\frac{m-4}{m}\xi} \Phi(\xi)= a$, for some constant $a > 0$ and obtain
 the first statement in \eqref{asymptotics}.

To obtain the asymptotic behavior of $(W,Q,Z)$, we use the asymptotic behavior 
of $\Phi$ together with \eqref{est:Y}.  For $\tau\geq 2\tau_0$, the asymptotic
 behavior of $\Phi$ in $\xi$ and $\tau$ imply 
\begin{align}\label{tau:asymp}
  \frac{2 m}{(4-m)a^{\frac{m}{3}}} e^{\frac{1}{3\alp}(\xi-\xi_0)} \leq \tau-\tau_0 \leq
  \frac{4 m}{(4-m)a^{\frac{m}{3}}} e^{\frac{1}{3\alp}(\xi-\xi_0)},
\end{align}
for $\tau_0 = \tau(\xi_0)$ sufficiently large. Moreover, we can estimate the integral 
in the exponents of \eqref{est:Y} using \eqref{tau:asymp} and the asymptotics 
of $\Phi$ to conclude that
\begin{align}\label{int:phi:tau} %
    C_2 \log(\tau-\tau_0) \leq \int_{\tau_0}^\tau\Phi^{\frac{m}{3}}(s)ds \leq C_1 \log(\tau-\tau_0)
\end{align}
  for $C_1,C_2 > 0$. Using \eqref{tau:asymp} and \eqref{int:phi:tau}
  in \eqref{est:Y}, we obtain that there exists $\xi_0$ large enough such
  that for all $\xi\geq \xi_0$ there exist positive constants with
\begin{align*}
    \hspace{6ex} & \hspace{-6ex} %
                   C_2 e^{- 2\Lambda(\xi)}e^{d_2(\xi-\xi_0)} \leq \| Y(\xi;\xi_0)\| %
    \leq C_1 e^{- \frac 12 \Lambda(\xi)} e^{ d_1(\xi-\xi_0)},
\end{align*}
where $\Lambda(\xi) = \frac 32 \alp^{\frac{4}{3}} a^{-\frac{m}{3}} e^{\frac{1}{3\alp}(\xi-\xi_0)}$. 
This implies the last statement in \eqref{asymptotics}.

  \medskip

{\it (ii):} By Lemma~\ref{lem-lim1}, we have $\Phi(\xi) \to a \geq 0$ as $\xi \to \infty$
 (we rename $K$ to $a$ for consistency with \eqref{LBU}). Let us show first that $a>0$. 
Arguing by contradiction, we assume that $a=0$, then we can apply the same argument as
 in the proof of {\it (i)} with $\alp=b$, and in particular the estimate \eqref{est:Y} also holds. 
This implies
    \begin{align}
      \int_0^\infty |1-W(\xi)| d\xi < \infty.
    \end{align}
    In view of the first equation in \eqref{DS}, this yields $W(\xi)\to 1$ as
    $\xi\to \infty$.  On the other hand, integrating the first equation in
    \eqref{DS}, we get
    \begin{align}
      \Phi(\xi) = \Phi(0) \exp \Big( \int_0^\xi (1- W(\eta)) d\eta \Big) %
      > C > 0,
    \end{align}
    uniformly for $\xi \in (0,\infty)$, but this means that $a > 0$, which is
 a contradiction. Then $a>0$, and the first equation in \eqref{DS}
yields $W(\xi) \to 1$ as $\xi \to \infty$.

\medskip

As in \eqref{DS-Y-xi}, $Y := (W-1,Q,Z)$ satisfies
\begin{equation}\label{DS-Y-xi4}
\frac{dY}{d\xi} =\frac{1}{\Phi^{\frac{4}{3}}} AY + B(W)Y 
=: \frac{1}{a^{\frac{4}{3}}} AY + B(1)Y+ C(\xi)Y,
\end{equation}
where the matrices $A$ and $B$ are given by \eqref{def-AB} (with $\alp=b$). Since $\Phi^{\frac{4}{3}}(\xi) \to a^{\frac{4}{3}}$ and $W(\xi) \to 1$ as $\xi\to\infty$, the matrix $C(\xi)$ satisfies $\|C(\xi)\| \to 0$. 
For $\xi_0 \in \R$ and $\xi \geq \xi_0$, we define the operator $U(\xi,\xi_0)$ by
  \begin{align}\label{U:flow:4}
      U(\xi_{0};\xi_{0})=I \quad \text{ and }\quad %
    \frac{d}{d\xi}U(\xi;\xi_{0}) &= 
\left[\frac{1}{a^{\frac{4}{3}}} A +  B(1)+ C(\xi) \right] U(\xi;\xi_{0}).
  \end{align}
  Since $\|C(\xi)\|\to 0$ as $\xi \to \infty$, by classical ODE theory we have
\begin{align*}
  \lim_{\xi_{0}\to\infty}\sup_{\| \xi-\xi_{0}\| \leq M}\| U( \xi;\xi_{0}) -e^{(\xi-\xi_0) (A/a^{\frac{4}{3}}+B(1))}\|    =0
\end{align*}
for any $M>0$ fixed. In particular, $W(\xi)\to 1$ as $\xi\to\infty$ implies also that $Q(\xi),Z(\xi)\to0$, since this is an
isolated critical point of \eqref{U:flow:4} with $C\equiv 0$. The exponential decay to this point is hence given by the roots with negative real part of the characteristic polynomial of $ A/a^{\frac{4}{3}}+B(1)$, which is
\begin{align*}
  P_{a}(z)= P_0(z) +\frac{b}{a^4}, \qquad \text{where $P_0(z) := (1-z^2)(2+z)$.}
\end{align*}
Clearly, $P_{a}$ always has a positive real root since $b/a^4>0$ and $z=1$ is
the only positive root of $P_0$. Furthermore, since $P_0$ has two real negative
roots and $P_a$ cannot have purely imaginary roots, it follows that $P_a$ has
two roots with negative imaginary part for all $a > 0$ (by continuity in
$a$). Now $\lam_0 = \Re(z_0) < 0$, the real part of the roots $z_0 \in \C$ with
the largest negative real part, controls the exponential decay in
the variable $\xi$. Finally, we note that $|\lambda_0|>1$. If the roots are
real, this is easy to check. If two of the roots are complex conjugates, this follows
from the fact that the sum of the three roots is equal to $-2$ and one of them
is real and positive.
\end{proof}

\medskip

We can now prove the main theorem:
\begin{proof}[Proof of Theorem~\ref{thm-lifting}] 

{\it (i):} From Lemma~\ref{lem-kapextreme} and Lemma~\ref{lem-shooting} we have
  $I_0 \subset (0,\sqrt{12\alp})$ and $I_0\neq \emptyset$. Then Proposition~\ref{prp-lim2} 
gives the behavior of solutions for $\kappa\in I_0$ in terms of the variables 
\eqref{def-PWZQ}. Changing back to the original variables we conclude that 
\eqref{LBU} holds since $e^{\frac{m-4}{m}\xi} \Phi(\xi) = \frac {f(y)}y$ for $y = e^\xi$.

  \medskip
  
{\it (ii):} Let $z_0$ be defined as in Proposition~\ref{prp-lim2}(ii) 
and let $z_0 = \lam_0 + i \omega_0$ for $\omega_0 \geq 0$. 
In view of \eqref{asymptoticsII} we have
\begin{align}
  \Big|(\ln \Phi)_\xi - g(\xi) e^{-|\lam_0| \xi}\Big| = o(e^{-|\lam_0| \xi})
\end{align}
for $g(\xi) = \Re(K e^{-\omega_0i \xi}) = A \cos (\omega_0 \xi) + B\sin(\omega_0 \xi)$
 for some $A,B \in \R$. Integration yields
\begin{align}
    \ln \Phi - \ln a + (A' \cos (\omega_0 \xi) + B' \sin (\omega_0 \xi))  e^{-|\lam_0| \xi} =  o( e^{-|\lam_0| \xi})%
\end{align}
for some $A',B' \in \R$. Since $\Phi = \frac fy$, we get
\begin{align}
 f(y) =  a y e^{\tilde g(y) y^{-|\lam_0|}} + o( y e^{C y^{-|\lam_0|}})  = 
a y + a \tilde g(y) y^{-|\lam_0|+1} + o( y^{-|\lam_0|+1})
\end{align}
for $\tilde g(y) = (A' \cos (\omega_0 \ln y)  + B' \sin (\omega_0\ln y))$.
\end{proof}
As a consequence of the asymptotic behavior of the solutions encoded in 
Proposition~\ref{prp-lim2} we obtain also that the energy of solutions of 
the form \eqref{ss-form} decreases in the following sense:
\begin{corollary}\label{corol-diss}
  \begin{enumerate}
  \item Let $m \in (0,4)$ and $h(x,t)$ be a solution of \eqref{tfe} 
of the form \eqref{ss-form}, where $f$ solves \eqref{ode}--\eqref{LBU}. 
Then, there exist constants $d_0, D_0 > 0$ such that
  \begin{equation}\label{ss-diss}
    -t^\alp d_0 < \frac 12 \int_{-\infty}^\infty (h_x^2 - h_{{\rm in},x}^2)dx\leq 
    -t^\alp D_0.
  \end{equation}
\item Let $m = 4$ and let $h(x,t)$ be a solution of \eqref{tfe} of the form 
\eqref{ss-form4} where $f$ solves \eqref{ode}--\eqref{LBU}. 
Then, there exist constants $d_0, D_0 > 0$ such that
\begin{equation}\label{ss-diss4}
 -e^{b t} d_0 < \frac{1}{2}\int_{-\infty}^\infty (h_x^2 - h_{{\rm in},x}^2)dx\leq -e^{b t} D_0.
  \end{equation}
  \end{enumerate}
\end{corollary}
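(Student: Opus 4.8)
The plan is to exploit the self-similar structure of $h$ together with Energy Formula~I (Lemma~\ref{lem-E1}) and the sharp asymptotics of Proposition~\ref{prp-lim2}. First I would compute the quantity $\frac12\int_{-\infty}^\infty(h_x^2-h_{{\rm in},x}^2)\,dx$ in terms of the profile $f$. Using $h(x,t)=t^\alpha f(|x|/t^\alpha)$ and $h_{\rm in}(x)=a|x|$, the change of variables $y=|x|/t^\alpha$ gives $\int_{-\infty}^\infty h_x^2\,dx = 2t^\alpha\int_0^\infty f_y(y)^2\,dy$ (the Jacobian $t^\alpha$ and the $x$-derivative contributing $t^{-\alpha}$ twice cancel one factor), while $\int h_{{\rm in},x}^2\,dx$ is formally infinite; the correct finite object is the difference, which after the same substitution becomes $2t^\alpha\int_0^\infty (f_y^2-a^2)\,dy$. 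So the claim reduces to showing
\begin{align*}
  -d_0 \;<\; \int_0^\infty \big(f_y(y)^2 - a^2\big)\,dy \;\leq\; -D_0
\end{align*}
for positive constants $d_0,D_0$, i.e. that this integral is finite, strictly negative, and bounded away from $0$; the case $m=4$ is identical with $t^\alpha$ replaced by $e^{bt}$.

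The upper bound (strict negativity bounded away from zero) is where Lemma~\ref{lem-E1} enters. Rewrite $E_1(y) = \alpha f f_y - \frac{\alpha}{2}y f_y^2 + f^m f_{yy}f_{yyy}$ and note $E_1(0)=0$, $E_1$ increasing, so $E_1(y)>0$ for $y>0$. I would integrate the relation between $E_1$ and the original ODE to extract $\int_0^\infty(\frac{\alpha}{2}f_y^2 + f^m f_{yyy}^2)\,dy = \lim_{y\to\infty}E_1(y)$, and separately manipulate $E_1$ itself: since $f(y)\sim ay$ and $f_y\to a$ at infinity (from \eqref{LBU} and the refined asymptotics $e^{\frac{m-4}{m}\xi}\Phi\to a$, plus $W\to1$ which gives $yf_y/f\to 1$), one has $\alpha f f_y - \frac{\alpha}{2}yf_y^2 \to \frac{\alpha}{2}\lim (2ff_y - yf_y^2) = \frac{\alpha}{2}\lim y f_y(2a - f_y) + \text{lower order}$; more carefully, writing $f = ay + r$ with $r$ a correction, a direct expansion shows $2ff_y - yf_y^2 = a^2 y + 2a r + \dots$ is \emph{not} the right grouping. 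Instead the cleaner route: since $E_1$ is increasing and the dissipation term $f^m f_{yyy}^2$ decays (by the exponential bounds on $(W-1,Q,Z)$ in \eqref{asymptotics}/\eqref{asymptotics4}, which force $Z$ and hence $f_{yyy}$ to decay super-polynomially while $f^m\sim a^m y^m$ grows only polynomially — one must check $f^m f_{yyy}^2$ is integrable), the limit $E_1(\infty)$ is finite and positive. Then I would identify $E_1(\infty)$ with (a multiple of) $-\int_0^\infty(f_y^2-a^2)\,dy$ by an integration by parts: note $\frac{d}{dy}(f f_y - a^2 y) = f_y^2 + f f_{yy} - a^2$, and $\frac{d}{dy}(\frac12 y f_y^2) = \frac12 f_y^2 + y f_y f_{yy}$; combining with the ODE to eliminate the $f_{yy}$, $f_{yyy}$ terms should yield $\int_0^\infty(f_y^2 - a^2)\,dy = -\frac{1}{\alpha}E_1(\infty) + (\text{boundary terms that vanish})$, giving $D_0 := E_1(\infty)/\alpha > 0$ for the upper bound.

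For the lower bound (finiteness, $>-d_0$), the key is that $f_y^2 - a^2 \to 0$ fast enough to be integrable. From Proposition~\ref{prp-lim2}, $W-1 = yf_y/f - 1$ decays like $e^{-K_0 e^{(4-m)\xi/3}}$ for $m<4$ (doubly exponential in $\xi$, hence faster than any power of $y$) and like $y^{\lambda_0}$ with $\lambda_0 < -1$ for $m=4$; combined with $f/y \to a$ this gives $f_y - a = o(y^{-1})$ with an integrable rate, so $\int_0^\infty|f_y^2 - a^2|\,dy < \infty$, and the near-origin part is finite since $f$ is smooth with $f_y(0)=0$ there. This yields some finite $d_0 > 0$. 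The main obstacle I anticipate is the integration-by-parts bookkeeping in the upper-bound step: making sure all boundary terms at $y=0$ (where $f(0)=1$, $f_y(0)=0$) and at $y=\infty$ (where $f\sim ay$, $f^m f_{yy}f_{yyy}\to 0$) genuinely vanish or combine correctly, so that the finite positive constant $E_1(\infty)$ really equals $\alpha$ times minus the dissipation integral — and dually, confirming that the decay rates from Proposition~\ref{prp-lim2} are strong enough to make $f^m f_{yyy}^2$ integrable on $(1,\infty)$ despite the polynomial growth of $f^m$.
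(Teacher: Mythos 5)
Your reduction of the claim to the profile, namely $\tfrac12\int_{\R}(h_x^2-h_{{\rm in},x}^2)\,dx = t^\alpha\int_0^\infty(f_y^2-a^2)\,dy$ (resp.\ $e^{bt}$ times that integral for $m=4$), is correct, and your lower bound (finiteness of $\int_0^\infty|f_y^2-a^2|\,dy$ from the decay rates in Proposition~\ref{prp-lim2}) is sound. Note, however, that this is a different route from the paper, which never touches $\int(f_y^2-a^2)\,dy$: it integrates the dissipation relation in time, formula \eqref{diss:form}, changes to self-similar variables to land directly on $\frac{2t^\alpha}{\alpha}\int_0^\infty f^m f_{yyy}^2\,dy$, and then only needs two-sided bounds on that single nonnegative integral.

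The genuine gap is in your upper-bound step. You assert that $E_1(\infty)$ is finite and positive and set $D_0=E_1(\infty)/\alpha$; this is false, because by \eqref{D1-y} one has $E_1'(y)=\frac{\alpha}{2}f_y^2+f^mf_{yyy}^2\to\frac{\alpha a^2}{2}>0$, so $E_1$ grows linearly and $E_1(\infty)=+\infty$ (the term $f^mf_{yyy}^2$ decays, but $\frac{\alpha}{2}f_y^2$ does not). What your integration by parts actually gives, after subtracting the linear growth, is
\begin{equation*}
  \frac{\alpha}{2}\int_0^Y\big(f_y^2-a^2\big)\,dy
  \;=\;\Big[\alpha\Big(ff_y-\tfrac12 Yf_y^2-\tfrac12 a^2Y\Big)+f^mf_{yy}f_{yyy}\Big]_{y=Y}
  \;-\;\int_0^Y f^mf_{yyy}^2\,dy,
\end{equation*}
and the bracket converges to $\alpha a\,\lim_{Y\to\infty}\big(f(Y)-aY\big)$, not to $0$ merely because $f\sim aY$; if that limit were positive the desired sign would fail. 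So you must prove $f(y)-ay\to 0$: for $m\in(0,4)$ this follows from the stretched-exponential decay of $W-1$ in \eqref{asymptotics}, since $f-ay=y(f/y-a)$ and $f/y-a$ inherits that decay upon integrating $(f/y)_y=(f_y-f/y)/y$; for $m=4$ it follows from $|\lambda_0|>1$ (Theorem~\ref{thm-lifting}(ii)). You must also show separately that $0<\int_0^\infty f^mf_{yyy}^2\,dy<\infty$ (strict positivity because $f_{yyy}\not\equiv0$ for a solution with $f(0)=1$; finiteness from the decay of $Z$ against the polynomial growth of $f^m$, as you anticipated). With these repairs your argument closes and gives $\frac{\alpha}{2}\int_0^\infty(f_y^2-a^2)\,dy=-\int_0^\infty f^mf_{yyy}^2\,dy$, i.e.\ you re-derive at the ODE level exactly the identity the paper obtains from \eqref{diss:form}; as written, though, the strict negativity bounded away from zero is not established.
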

\begin{proof} 
We only give the proof of (i) since the proof of (ii) proceeds analogously. 
We hence assume $m \in (0,4)$. The dissipation formula reads:
  \begin{equation}\label{diss:form}
    \frac{1}{2}\int_{-\infty}^\infty (h_x^2 - h_{{\rm in},x}^2) \ dx +
 \int_0^t\int_{-\infty}^{\infty} h^m h_{xxx}^2 \ dx  dt = 0.
  \end{equation}
Changing to the self-similar variables in \eqref{diss:form} we obtain
\begin{align*}
 \frac{1}{2}\int_{-\infty}^\infty (h_{{\rm in},x}^2 - h_x^2) \ dx = 
    \frac{2t^{\alp}}{\alp} \int_{0}^{\infty  } f^m f_{yyy}^2 \ dy.
\end{align*}
We now observe that there exist $d_0>0$ and $D_0>0$ such that
\begin{align*}
    d_0 <\int_{0}^{\infty} f^m f_{yyy}^2 \ dy\leq D_0.
\end{align*}
In view of the definitions of $\Phi$, $Z$ in \eqref{def-PWZQ} and by
Proposition~\ref{prp-lim2}(i), we have $\lim_{y \to \infty} \frac{f}{y} = 
\lim_{\xi \to \infty} e^{\frac{4-m}{m}\xi}\Phi(\xi) = a$ and thus
  \begin{align*}
    &\lim_{y\to \infty} e^{K_0y^{\frac{1}{3\alp}}}y^{\frac{2(m-1)}{3}}|f_{yyy}| = %
    \lim_{y \to \infty} (e^{K_0y^{\frac{1}{3\alp}}} y^{\frac{1}{3}}f^{\frac{2m-3}{3}}|f_{yyy}|) 
(y^{\frac{2m-3}{3}}f^{-\frac{2m-3}{3}}) < \infty.
  \end{align*}
Hence, there are $C$, $c$, $c'>0$ and $y_0$ large enough with
\begin{align*}
        c \int_{y_0}^{\infty} e^{-2K_0y^{\frac{1}{3\alp}}}y^{-\frac{1}{3\alp} }\ dy \leq
 \int_{0}^{\infty} f^m(f_{yyy})^2\ dy \leq 
c'+ C \int_{y_0}^{\infty} e^{-2K_0y^{\frac{1}{3\alp}}}y^{-\frac{1}{3\alp} }\ dy.
 \end{align*}
This implies the existence of $d_0>0$ and $D_0>0$ such that
 \eqref{ss-diss} holds.
 \end{proof}

\medskip

\paragraph{Acknowledgments:} The authors acknowledge the financial support of the German Science Foundation 
(DFG) through the project CRC 1060 {\it The mathematics of emergent effects}, of the Spanish Government 
through the MINECO project MTM2014-53145-P, and of the Basque Government through the Research Group grant
 IT641-13.

%%%%%%%%%%%%%%%%%%%%%%%%%%%%%%%%%%%%%%%%%%%%%%%%%%%%%%%%%%%%%%%%%%%%%%

 \appendix
 \section{Merging of two droplets} \label{sec-appendix}

In this section we briefly outline the formal asymptotics that corresponds to the merging
 of two droplets in the case $0 < m < 4$. Namely, we consider \eqref{tfe} 
with initial data %
\begin{align}\label{initial-drops}
  h_{\rm in}(x) = (a |x| - b x^{2})_+
\end{align}
for $a,b > 0$. Notice that $h_{\rm in}$ is a stationary solution of \eqref{tfe} 
representing two identical droplets with a contact touch-down point at
$x=0$. Also the cone conditions
\eqref{separation-point}-\eqref{separation-point-angle} are clearly satisfied at
the contact point $x = 0$. We assume that lifting takes place in the
self-similar way \eqref{ss-form} at $x=0$. To leading order as $t\to 0^+$, the
solution is then given by the self-similar solution of
\eqref{ode}--\eqref{LBU}. The behavior \eqref{LBU}, with exponential
corrections, gives the matching into $h_{\rm in}(x)$ for small
times. In order to derive the next order correction, we write
 \begin{align*}
   h(x,t) = t^\alp f \left( \frac{x}{t^\alp},\log t \right).
 \end{align*}
 Inserting this ansatz into \eqref{tfe} and with the notation 
$y = \frac {|x|}{t^\alp}$, $\tau = \log t$, we get
 \begin{align*}
    f_{\tau}+\alpha (f - yf_{y}) + (f^{m}f_{yyy})_{y} = 0.
 \end{align*}
 To be consistent with \eqref{initial-drops}, we impose the
 matching condition%
  \begin{align} \label{match-1} %
    t^\alp  f( y,\tau) \sim (a |x| -bx^{2})_+ = 
t^\alp (a |y| - b    e^{\alp\tau}y^{2})_+
 \end{align}
 for $|y|\gg 1$, $\tau\to -\infty$, $e^{\tau \alp} y = |x| \ll 1$. Let us denote
 by $f_0$ the solution to \eqref{ode}--\eqref{LBU}. In order to determine the next
 order correction $f_1$, we make the ansatz
 \begin{align*}
    f(y,\tau) = f_0(y) + f_1(y,\tau).
 \end{align*}
 In view of \eqref{match-1} and since $\frac{f_0(y)}{y} \to a$ for
 $|y| \to \infty$, this yields the following matching condition for $f_1$:
 \[
f_1(y,\tau) \sim - be^{\alp\tau}y^{2}\quad\text{as}\ |y| \gg1\,,\quad
 \tau\to-\infty.
\] 
This suggests that we look for functions $f_1$ of the form
\begin{equation}\label{f1-P}
 f_1(y,\tau) =e^{\alp\tau}P(y),
 \end{equation}
 where $P$ has the asymptotic behavior
 \begin{equation}\label{P-match}
 P(y) \sim - by^{2}\quad \text{as}\quad |y|\gg1
 \end{equation}
 and where $P$ solves the leading order balance equation
 \begin{equation}\label{P-eq}
 2 \alp P - yP_{y} + (f_{0}^{m}P_{yyy})_{y} + m(f_{0}^{m-1}f_{0,yyy}P)_{y}=0.
 \end{equation}
 Assuming that there is a solution of \eqref{P-match}--\eqref{P-eq} for a given $b$, 
we obtain the next order correction by solving this problem and \eqref{f1-P}.

{ \bibliographystyle{plain}

%\bibliography{selfsim_lift} 

\begin{thebibliography}{10}

\bibitem{BarrettBloweyGarcke-1998}
J.W. Barrett, J.F. Blowey, and H.~Garcke.
\newblock Finite element approximation of a fourth order nonlinear degenerate
  parabolic equation.
\newblock {\em Numer. Math.}, 80(4):525--556, 1998.

\bibitem{Bellman1969}
R.~Bellman.
\newblock {\em Stability theory in differential equations}.
\newblock Dover Publications, Inc., New York, 1969.

\bibitem{BerettaBertschDalpasso-1995}
E.~Beretta, M.~Bertsch, and R.~Dal~Passo.
\newblock Nonnegative solutions of a fourth-order nonlinear degenerate
  parabolic equation.
\newblock {\em Arch. Rational Mech. Anal.}, 129:175--200, 1995.

\bibitem{BernisFriedman-1990}
F.~Bernis and A.~Friedman.
\newblock Higher order nonlinear degenerate parabolic equations.
\newblock {\em J. Differential Equations}, 83:179--206, 1990.

\bibitem{BernisHulshofKing-2000}
F.~Bernis, J.~Hulshof, and J.~R. King.
\newblock Dipoles and similarity solutions of the thin film equation in the
  half-line.
\newblock {\em Nonlinearity}, 13:413--439, 2000.

\bibitem{BernisPeletierWilliams-1992}
F.~Bernis, L.~A. Peletier, and S.~M. Williams.
\newblock Source type solutions of a fourth order nonlinear degenerate
  parabolic equation.
\newblock {\em Nonlinear Anal.}, 18:217--234, 1992.

\bibitem{BertozziPugh-1996}
A.~L. Bertozzi and M.~Pugh.
\newblock The lubrication approximation for thin viscous films: regularity and
  long-time behavior of weak solutions.
\newblock {\em Comm. Pure Appl. Math.}, 49:85--123, 1996.

\bibitem{Bertozzi-1998}
A.L. Bertozzi.
\newblock The mathematics of moving contact lines in thin liquid films.
\newblock {\em Notices Amer. Math. Soc.}, 45(6):689--697, 1998.

\bibitem{BertschDalpassoGarckeGruen-1998}
M.~Bertsch, R.~Dal~Passo, H.~Garcke, and G.~Gr{\"u}n.
\newblock The thin viscous flow equation in higher space dimensions.
\newblock {\em Adv. Differential Equations}, 3:417--440, 1998.

\bibitem{BertschGiacomelliKarali-2005}
M.~Bertsch, L.~Giacomelli, and G.~Karali.
\newblock Thin-film equations with ``partial wetting'' energy: existence of
  weak solutions.
\newblock {\em Phys. D}, 209:17--27, 2005.

\bibitem{ConstantinDupontEtal-1993}
P.~Constantin, T.~F. Dupont, R.~E. Goldstein, L.~P. Kadanoff, M.~J. Shelley,
  and S.~Zhou.
\newblock Droplet breakup in a model of the {H}ele-{S}haw cell.
\newblock {\em Phys. Rev. E}, 47:4169--4181, Jun 1993.

\bibitem{DalpassoGarckeGruen-1998}
R.~Dal~Passo, H.~Garcke, and G.~Gr{\"u}n.
\newblock On a fourth-order degenerate parabolic equation: global entropy
  estimates, existence, and qualitative behavior of solutions.
\newblock {\em SIAM J. Math. Anal.}, 29:321--342 (electronic), 1998.

\bibitem{DuistermaatKolk2010}
J.~J. Duistermaat and J.~A.~C. Kolk.
\newblock {\em Distributions}.
\newblock Cornerstones. Birkh\"auser Boston, Inc., Boston, MA, 2010.
\newblock Theory and applications, Translated from the Dutch by J. P. van Braam
  Houckgeest.

\bibitem{DussanDavis-1974}
E.~B. Dussan and S.~H. Davis.
\newblock On the motion of a fluid-fluid interface along a solid surface.
\newblock {\em J. Fluid Mech.}, 65:71--95, 1974.

\bibitem{BernisFerreira-1997}
R.~Ferreira and F.~Bernis.
\newblock Source-type solutions to thin-film equations in higher dimensions.
\newblock {\em European J. Appl. Math.}, 8:507--524, 1997.

\bibitem{GiacomelliGnannKnuepferOtto-2014}
L.~Giacomelli, M.~V. Gnann, H.~Kn{\"u}pfer, and F.~Otto.
\newblock Well-posedness for the {N}avier-slip thin-film equation in the case
  of complete wetting.
\newblock {\em J. Differential Equations}, 257:15--81, 2014.

\bibitem{GiacomelliGnannOtto-2013}
L.~Giacomelli, M.~V. Gnann, and F.~Otto.
\newblock Regularity of source-type solutions to the thin-film equation with
  zero contact angle and mobility exponent between {$3/2$} and 3.
\newblock {\em European J. Appl. Math.}, 24:735--760, 2013.

\bibitem{GiacomelliKnuepfer-2010}
L.~Giacomelli and H.~Kn{\"u}pfer.
\newblock A free boundary problem of fourth order: classical solutions in
  weighted {H}\"older spaces.
\newblock {\em Comm. Partial Differential Equations}, 35:2059--2091, 2010.

\bibitem{GiacomelliKnuepferOtto-2008}
L.~Giacomelli, H.~Kn{\"u}pfer, and F.~Otto.
\newblock Smooth zero-contact-angle solutions to a thin-film equation around
  the steady state.
\newblock {\em J. Differential Equations}, 245:1454--1506, 2008.

\bibitem{GiacomelliShishkov-2005}
L.~Giacomelli and A.~Shishkov.
\newblock Propagation of support in one-dimensional convected thin-film flow.
\newblock {\em Indiana Univ. Math. J.}, 54(4):1181--1215, 2005.

\bibitem{Gnann-2016}
M.~V. Gnann.
\newblock On the regularity for the {N}avier-slip thin-film equation in the
  perfect wetting regime.
\newblock {\em Arch. Ration. Mech. Anal.}, 222(3):1285--1337, 2016.

\bibitem{Greenspan-1978}
H.~P. Greenspan.
\newblock Motion of a small viscous droplet that wets a surface.
\newblock {\em J. Fluid Mech.}, 84:125--143, 1978.

\bibitem{Gruen-2004-1}
G.~Gr{\"u}n.
\newblock Droplet spreading under weak slippage---existence for the {C}auchy
  problem.
\newblock {\em Comm. Partial Diff. Eq.}, 29(11-12):1697--1744, 2004.

\bibitem{HuhScriven-1971}
C.~Huh and L.E. Scriven.
\newblock Hydrodynamic model of steady movement of a solid/liquid/fluid contact
  line.
\newblock {\em J. Coll. Int. Sc.}, 35:85--101, 1971.

\bibitem{John-2015}
D.~John.
\newblock On uniqueness of weak solutions for the thin-film equation.
\newblock {\em J. Differential Equations}, 259(8):4122--4171, 2015.

\bibitem{Knuepfer-2011}
H.~Kn{\"u}pfer.
\newblock Navier slip thin--film equation for partial wetting.
\newblock {\em Comm. Pure Appl. Math.}, 64:1263--1296, 2011.

\bibitem{KnuepferMasmoudi-2013}
H.~Kn{\"u}pfer and N.~Masmoudi.
\newblock Well-posedness and uniform bounds for a nonlocal third order
  evolution operator on an infinite wedge.
\newblock {\em Comm. Math. Phys.}, 320(2):395--424, 2013.

\bibitem{KnuepferMasmoudi-2015}
H.~Kn\"upfer and N.~Masmoudi.
\newblock Darcy's flow with prescribed contact angle: well-posedness and
  lubrication approximation.
\newblock {\em Arch. Ration. Mech. Anal.}, 218(2):589--646, 2015.

\bibitem{OronDavisBankoff-1997}
A.~Oron, S.~Davis, and S.~Bankoff.
\newblock Long-scale evolution of thin liquid films.
\newblock {\em Rev. Mod. Phys.}, 69(3):931--980, 1997.

\bibitem{Otto-1998}
F.~Otto.
\newblock Lubrication approximation with prescribed nonzero contact angle.
\newblock {\em Comm. Partial Differential Equations}, 23:2077--2164, 1998.

\bibitem{Reynolds-1886}
O.~Reynolds.
\newblock On the theory of lubrication and its application to {M}r. {B}eauchamp
  {T}ower's experiments, including an experimental determination of the
  viscosity of olive oil.
\newblock {\em Proc. R. Soc. London}, 40:191--203, 1886.

\end{thebibliography}
}

\end{document}